\newtheorem{Theorem}{Theorem}[section]
\newtheorem{Lemma}[Theorem]{Lemma}
\newtheorem{Proposition}[Theorem]{Proposition}
\newtheorem{Corollary}[Theorem]{Corollary}
\theoremstyle{definition}
\newtheorem{Definition}{Definition}
\theoremstyle{remark}
\newtheorem{Remark}[Theorem]{Remark} 
\numberwithin{equation}{section}
\newcommand{\R}{\mathbb R}
\newcommand{\C}{\mathbb C}
\newcommand{\D}{\mathbb D}
\newcommand{\su}{\mathfrak{su}_3}
\newcommand{\g}{\mathcal {G}}
\newcommand{\U}{{\rm U}_{3}}
\newcommand{\SU}{{\rm SU}_{3}}
\newcommand{\SUt}{{\rm SU}_{2}}
\newcommand{\SO}{{\rm SO}_{3}}
\newcommand{\Uone}{{\rm U}_1}
\newcommand{\SL}{{\rm SL}_3 \mathbb C}
\renewcommand{\sl}{\mathfrak{sl}_3 \mathbb C}
\newcommand{\lsu}{\Lambda \mathfrak{su}_3}
\newcommand{\ad}{\operatorname{Ad}}
\newcommand{\di}{\operatorname{diag}}
\newcommand{\tr}{\operatorname{trace}}
\newcommand{\Fi}{\operatorname{Fix}}
\renewcommand{\Im}{\operatorname {Im}}
\newcommand{\bpm}{\begin{pmatrix}}
\newcommand{\epm}{\end{pmatrix}}
\newcommand{\CP}{\mathbb {CP}^{2}}
\newcommand{\f}{\mathfrak f}
\newcommand{\LGr}{\operatorname{L_{Gr}}}
\newcommand{\SLGr}{\operatorname{SL_{Gr}}}
\begin{document}
\title{Ruh-Vilms Theorems For Minimal Surfaces Without Complex Points and Minimal Lagrangian  Surfaces in  $\C P^2$}
 
 \author[J. F.~Dorfmeister]{Josef F. Dorfmeister}
 \address{Fakult\"at f\"ur Mathematik, 
 TU-M\"unchen, 
 Boltzmannstr.3,
 D-85747, 
 Garching, 
 Germany}
 \email{dorfm@ma.tum.de}

 \author[S.-P.~Kobayashi]{Shimpei Kobayashi}
 \address{Department of Mathematics, Hokkaido University, 
 Sapporo, 060-0810, Japan}
 \email{shimpei@math.sci.hokudai.ac.jp}
 \thanks{The second named author is partially supported by JSPS KAKENHI Grant Number JP18K03265. }
 
\author{Hui Ma}
\address{Department of Mathematical Sciences, Tsinghua University, Beijing 100084, P.R.China} 
\email{ma-h@tsinghua.edu.cn}
\thanks{The third named author is partially supported by NSFC No.11831005, No.11671223 and No. 11961131001.}
 \subjclass[2010]{Primary~53A10, 53B30, 58D10, Secondary~53C42}
 \keywords{Minimal surfaces; Ruh-Vilms theorems; Gauss maps; Loop groups}
 \date{\today}
\pagestyle{plain}
\begin{abstract}
 In this paper we investigate surfaces in $\C P^2$ without complex points and 
 characterize the minimal surfaces without complex points and the minimal 
 Lagrangian surfaces by  Ruh-Vilms type theorems. We also discuss the liftability of  an immersion from a surface to $\C P^2$ into $S^5$ in Appendix \ref{appendix}.
\end{abstract}
\maketitle

\section*{Introduction }

In recent years minimal Lagrangian surfaces in $\C P^2$ have been studied intensively (see \cite{CU, MM, McIn, Mironov, O}, etc.). It turned out that an automorphism $\sigma$ 
 of $\sl$ of order $6$ is of crucial importance. Similar investigations have used the restriction of $\sigma$ to real forms of $\sl$ and have discussed the surface classes of minimal Lagrangian surfaces in $\C \mathbb{H}^2$ \cite{McI-Loftin} and definite affine spheres \cite{Do-EWang}. Also the class of indefinite affine spheres and timelike minimal Lagrangian surfaces in the indefinite complex hyperbolic 2-space have been investigated in a similar way \cite{DE, DoKo}. Moreover, those classes of surfaces 
 have a unified picture by using real forms of 
the Kac-Moody Lie algebra of $A_2^{(2)}$ type \cite{DFKW}. While $\sigma$ arose naturally in classical geometric investigations, the question arose, whether also $\sigma^2$ and $\sigma^3$ have a simple geometric meaning.

The starting point for an approach to this question was the paper \cite{MM}, which investigated arbitrary immersions from Riemann surfaces to $\C P^2$ without complex points.
However, since immersions from $S^2$ to $\C P^2$ have been investigated intensively,  for the final goals of this paper we exclude the Riemann surface $S^ 2$ from our discussion.
More precisely, we consider an immersion $f:M \rightarrow \C P^2$ without complex points, where $M$ is a Riemann surface different from $S^2$. For our approach it is crucial to lift $f$ to a map $\f :M \rightarrow S^5$ such that $ f = \pi \circ \f$ where $\pi : S^5 \rightarrow \C P^2$ denotes the Hopf fibration. To clarify, when such a lift exists we have proven in the appendix that for a non-compact Riemann surface such a lift always exists and that in the case of a compact Riemann surface either the given immersion already has a global lift to $S^5$ or one can
 find a threefold covering 
$ \tau: \hat{M} \rightarrow M$ of $M$ such that the immersion 
$\hat{f} = f \circ \tau : \hat{M} \rightarrow \C P^2$ admits a global lift to $S^5$.

So for this paper we always assume that any immersion under consideration does have 
a global lift to $S^5$.
For a more detailed investigation of \emph{liftable} immersions $f:M \rightarrow \C P^2$ with global lift  $\f :M \rightarrow S^5$, we consider, to begin with, their composition with the universal covering
$\tilde \pi : \D \rightarrow M$ of $M$. In other word, we first investigate the case, where $M = \D$ is simply-connected. 

In this setting the ideas presented in \cite{MM} is applied. However, while in 
loc.cit. the investigation quickly moved on to consider minimal Lagrangian 
tori in $\C P^2$, in the present paper we consider a natural $\SU$-frame  
$\mathcal{F}(\f)$ and thus obtain a setting similar to the one used in \cite{DoMa1}.

In particular, two lifts with $\SU$-frame only differ by a cubic root.
Moreover, the Maurer-Cartan form of the frame $\mathcal{F}(\f)$  clearly displays the
natural invariants of an immersion into $\C P^2$ without complex points.

To understand what surface classes correspond to $\sigma, \sigma^2$ and $\sigma^3$
we apply the notion of a \emph{primitive harmonic map} relative to some automorphism of 
$\sl$. The corresponding theory, basically due to Black \cite{B},  is  collected in the first three subsections of section \ref{sc:alg}. Then we prove (Theorem \ref{equivprimitive}) that the lift 
$\f :M \rightarrow S^5$  
of a liftable immersion  $f: M \rightarrow \C P^2$ without complex points is primitive harmonic relative to $\sigma, \sigma^2$ and $\sigma^3$ respectively if and only if $f$ is minimal Lagrangian, minimal without complex points and minimal Lagrangian or flat homogeneous, respectively.
It is natural to ask, whether actually any primitive harmonic map relative to 
$\sigma$, $\sigma^2$ and $\sigma^3$ is associated to an immersion into $\C P^2$ without complex points. This is assured in Theorem \ref{thm:primitive}.

The last part of the paper answers a natural question arising from the above: when considering primitive harmonic maps one singles out special immersions among a larger class of immersions and the frames of these immersions project to $k$-symmetric spaces like a 
\lq\lq Gauss like map\rq\rq. How does this work out for the surface classes considered in this paper?

We start by considering spaces $FL_j$, $j = 1, 2, 3$ similar to \cite{McI-Loftin}. Thus we obtain three $6$-symmetric spaces of dimension $7$ which all are actually equivariantly isomorphic to $\SU/\Uone$ (Theorem \ref{Thm:3.3} and Corollary \ref{cor:3.4}). From these spaces we obtain natural projections to four different spaces (Theorem \ref{Thm:3.5}). Two of these are equivariantly diffeomorphic symmetric spaces relative to $\sigma^3$ of dimension $5$ and two are equivariantly diffeomorphic $3$-symmetric spaces relative to $\sigma^2$ of dimension $6$. 
 Let  $\SLGr(3, \C), \widetilde{\SLGr(3, \C)}$ denote 
the symmetric spaces above and $\widetilde{Fl_2}, Fl_2$ the $3$-symmetric spaces above.
 Then for any immersion $f: \D \rightarrow \C P^2$ without complex points and with lift $\f : \D \rightarrow S^5$ and 
 $\SU$-frame $\mathcal{F}(\f)$ we define a \emph{Gauss type map}  $\mathcal{G}_{j}$ to $FL_{j}$, 
 $\mathcal{H}_{1}$ to $\SLGr(3, \C)$, $\mathcal{H}_{2}$ to $Fl_{2}$, $\mathcal{H}_{3,1}$ to  $ \widetilde{\SLGr(3, \C)}$ and $\mathcal{H}_{3,2}$ to  $ \widetilde{Fl_{2}}$ given by the natural projection of $\mathcal{F}(\f)$.  
 We finally  prove the  Ruh-Vilms Theorems for $\sigma$, $\sigma^2$ and $\sigma^3$ (Theorem \ref{Thm:3.6}), characterizing minimal Lagrangian surfaces and minimal surfaces without complex points by the primitive harmonicity of the corresponding Gauss maps.
In appendix \ref{appendix}, 
 we discuss the liftability of an immersion $f: M \to \C P^2$ into $S^5$.


\section{Theory of Surfaces in $\C P^2$}

\subsection{Basic Definitions: the metrics}

Let $\C P^2$ denote the $2$-dimensional complex projective space endowed with the
Fubini-Study metric  
of constant holomorphic sectional curvature  $4$.

For the convenience of the reader we recall the definition.
For this we will use the natural  $\C$-bilinear quadratic form
\begin{equation*}
Z \cdot W = \sum_{k=1}^{3} z_kw_k,
\end{equation*}
where $Z = (z_1,z_2,z_3)$, $W = (w_1, w_2, w_3) \in \C^3$.

Then it is well known that the Fubini-Study metric can be given in \emph{homogeneous coordinates} by the formula:
\begin{equation} \label{Fubini-Study-homogeneous}
ds^2 = \frac{|Z|^2 |dZ|^2 - (\bar{Z} \cdot dZ) (Z \cdot d \bar{Z})}{|Z|^4},
\end{equation}
where $Z$ is a local holomorphic section of the tautological bundle of $\C P^2$.

Now it is an easy computation to show that equation
\eqref{Fubini-Study-homogeneous} is unchanged, if one replaces 
 $Z$ by $hZ$, where $h$ is any scalar $C^\infty$-function with values in $\C^*$.
 As a consequence, we can replace $Z$ by $Z/|Z|$ and thus obtain:
\begin{equation} \label{Fubini-Study-sphere}
ds^2 =  |dZ|^2 - (\bar{Z} \cdot dZ) (Z \cdot d \bar{Z}) = (dZ - (d Z \cdot \bar{Z})Z) \otimes
(d \bar{Z} -  (d \bar{Z} \cdot Z) \bar{Z}).
\end{equation}
Note that now $Z$ maps into the unit sphere $S^5$ in $\C^3$. Also note that we will obtain the same expression if we replace here $Z$ by $hZ$, where $h$ is any $\C^\infty$-function with values in $S^1$.

Let $\pi: S^5 \rightarrow \C P^2$ be the Hopf-projection, $p \rightarrow [p]$. Then $\pi$ is a Riemannian submersion, if one considers the metric on $S^5$ induced from the standard Hermitian product on $\C^3$ and the Fubini-Study metric on $\C P^2.$

\subsection{Liftable surfaces in $\C P^2$}\label{susc:1.2}

Let $M$ be a Riemann surface different from $S^2$ and 
$f: M \rightarrow \C P^2$  a conformal immersion. We will  write the induced metric locally in  the form
\begin{equation}
g = 2 e^\omega dz d\bar{z},
\end{equation}
 for some real valued function $\omega$.
 
 For the approach used in this paper we will need lifts of $f:M \rightarrow \C P^2$ to 
 $\f:M \rightarrow S^5$.
 
 \begin{Lemma}
 Let $M$ be a contractible Riemann surface and $f: M \rightarrow \C P^2$  a conformal immersion, then there exists a  conformal immersion $\f: M \rightarrow S^5$  such that $ f = \pi \circ \f$ holds.
 \end{Lemma}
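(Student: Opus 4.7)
My plan is to exploit the principal $S^1$-bundle structure of the Hopf fibration $\pi:S^5\to\C P^2$.

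\textbf{Existence of a smooth immersive lift.} The pullback $f^{-1}\pi:f^{-1}S^5\to M$ is a principal $S^1$-bundle, and $M$ being contractible makes this bundle trivial (principal $S^1$-bundles are classified by $H^2(M,\Z)=0$); a global smooth section gives a smooth map $\f_0:M\to S^5$ with $\pi\circ\f_0=f$. That $\f_0$ is itself an immersion is automatic, since $d\pi\circ d\f_0=df$ is injective.

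\textbf{Achieving conformality.} Every lift has the form $\f=e^{i\theta}\f_0$ for a smooth $\theta:M\to\R$. Because $\pi$ is a Riemannian submersion, one has the decomposition $\f^*g_{S^5}=f^*g_{FS}+\alpha^2$, where $\alpha=\Im(d\f\cdot\bar\f)$ is the pullback of the Hopf connection $1$-form. Since $f^*g_{FS}$ already lies in the conformal class of the Riemann surface $M$, the lift $\f$ is conformal with respect to the same structure exactly when $\alpha^2$ is a scalar multiple of the reference metric; but $\alpha^2$ has rank at most one while a non-degenerate metric has rank two, forcing $\alpha\equiv 0$. In terms of $\theta$ this horizontal condition reads $d\theta=-\alpha_{\f_0}$, a linear first-order PDE solvable on the simply-connected $M$ by the Poincar\'e lemma \emph{provided} $\alpha_{\f_0}$ is closed.

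\textbf{The main obstacle} is verifying this closedness. A direct computation gives $d\alpha_{\f_0}$ proportional to $f^*\omega_{FS}$ for $\omega_{FS}$ the K\"ahler form on $\C P^2$, which vanishes only when $f$ is Lagrangian. Since the lemma does not assume $f$ Lagrangian, I expect ``conformal immersion $\f$'' to be interpreted here in a weaker sense (for instance, as a smooth immersion with $\pi\circ\f=f$ later adapted to the $\SU$-framing mentioned in the introduction), since a strictly conformal horizontal lift cannot exist for a general non-Lagrangian $f$. In either case the bundle-triviality argument of the first step supplies the core content — a smooth immersive lift — and I would complete the proof by aligning the notion of conformality with the rest of the paper's framework.
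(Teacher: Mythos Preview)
Your first paragraph is exactly the paper's proof in full: pull back the Hopf fibration, invoke triviality of $S^1$-bundles over a contractible base, take a global section, and compose with the map to $S^5$. The paper stops there.

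Everything you write after that is additional analysis the paper does not attempt. In particular, the paper's proof never verifies that the lift $\f$ is conformal as a map into $S^5$; the word ``conformal'' in the lemma's statement is either a slip or is meant in the loose sense you suspect (a smooth lift of the conformal immersion $f$). Your observation that a genuinely conformal lift would have to be horizontal, hence exist only in the Lagrangian case, is correct, and the paper itself corroborates this reading: in the sequel the authors work with arbitrary lifts carrying a generally nonzero $\rho = \f_z \cdot \bar{\f}$, and after Corollary~1.10 they remark that once the frame is normalized to $\SU$ one \emph{cannot} assume $\rho$ has the special form $\rho_0$, let alone $\rho=0$. So your suspicion is well founded, and your first step already delivers what the paper actually proves.
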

 \begin{proof}
 We consider the pullback  $f^*S^5$ of the Hopf fibration to $M$. Then 
 $f^*S^5$ is an $S^1$-bundle over $M$. But fiber  bundles over a contractible base are trivial. Therefore there exists a section $s: M  \rightarrow f^*S^5$ and the composition of $s$ with the natural map from  $f^*S^5$ to $S^5$ yields the desired map.
 \end{proof}

 For general Riemann surfaces $M$  and general conformal immersions 
 $f: M \rightarrow \C P^2$ such a (global) lift  $\f:M \rightarrow S^5$ may not exist.

As a consequence, papers considering surfaces as we do in this paper usually 
 restrict $f$ to contractible open subsets $U$ of $M$. Since it is not clear how one can glue these maps for different $U's$ together, we will not follow this approach, but rather consider exclusively liftable immersions, where an immersion $f:M \rightarrow \C P^2$ is called \emph{liftable}  if there exists an immersion $\f: M \rightarrow S^5$  such that $f = \pi \circ \f$ holds.
 
 Note, two lifts of some immersion $f$ differ by a scalar function which takes values in $S^1$.
In the rest of this paper we will always use conformal liftable surfaces.
 
 
 \subsection{The basic invariants for surfaces $f:\D \rightarrow \C P^2$ in terms of $\f$}\label{susc:1.3}
 
 Let   $f_M :M \rightarrow \C P^2$ be a liftable immersion and $\f_M: M \rightarrow S^5$  a lift of $f$.
 
 Using the universal cover $ \tilde{\pi}: \D \rightarrow M$ of $M$ we will also consider the immersion
 $\tilde{f}_M = f_M \circ \tilde{\pi}$ and write  
 $ \tilde{\f}_M = \f :\D \rightarrow S^5$.

We consider next  the following diagram.
\[
\begin{tikzcd}[column sep=4em,row sep=4em]
\D  \ar{r}{\f}
    \ar{d}[swap]{\tilde \pi}   &   S^5   \ar{d}{\pi}   \\
M   \ar{r}{f_M} \ar{ur}{\f_M}   &   \C P^2
\end{tikzcd}
\]
\begin{Lemma}
One can choose without loss of generality $\f$ such that the diagram commutes.
\end{Lemma}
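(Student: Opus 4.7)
The plan is to observe that the composition $\f_M \circ \tilde\pi$ is itself a lift of $\tilde f_M = f_M \circ \tilde\pi$ to $S^5$, and then to invoke the fact, noted at the end of subsection \ref{susc:1.2}, that any two lifts of the same immersion differ by a scalar function with values in $S^1$. Under this observation, replacing a given arbitrary $\f$ by $\f_M \circ \tilde\pi$ costs nothing (the two are related by multiplication by a unimodular function), hence the ``without loss of generality'' language is justified.

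Concretely, I would proceed as follows. First I would set $\hat\f := \f_M \circ \tilde\pi : \D \rightarrow S^5$. Since $\tilde\pi$ is a holomorphic covering map (so in particular a local biholomorphism) and $\f_M$ is an immersion from $M$ to $S^5$, the composition $\hat\f$ is automatically a smooth conformal immersion. Moreover, using $\pi \circ \f_M = f_M$, one computes directly
\begin{equation*}
\pi \circ \hat\f \;=\; \pi \circ \f_M \circ \tilde\pi \;=\; f_M \circ \tilde\pi \;=\; \tilde f_M,
\end{equation*}
so $\hat\f$ is a valid lift of $\tilde f_M$ in the sense used in the previous subsection.

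Next, starting from an arbitrary lift $\f : \D \rightarrow S^5$ of $\tilde f_M$, the two maps $\f$ and $\hat\f$ both project via $\pi$ to $\tilde f_M$. Since $\pi$ is the Hopf fibration with fiber $S^1$, the pointwise quotient $h(z) := \f(z) / \hat\f(z)$ is a well-defined smooth function $h : \D \rightarrow S^1$. Replacing $\f$ by $h^{-1}\f$ yields a new lift (still satisfying $\pi \circ (h^{-1}\f) = \tilde f_M$) which by construction equals $\hat\f = \f_M \circ \tilde\pi$, so the outer triangle in the diagram commutes.

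There is no real obstacle here; the only mild point to verify is that such a redefinition of $\f$ is harmless for the subsequent constructions. This is precisely the content of the remark that two lifts differ by an $S^1$-valued scalar, so from the viewpoint of all intrinsic invariants (the induced metric, the basic invariants introduced in subsection \ref{susc:1.3}, etc.) the choice $\f = \f_M \circ \tilde\pi$ is as good as any other, and this is what is meant by ``without loss of generality''.
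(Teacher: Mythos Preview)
Your argument is correct and follows essentially the same route as the paper: both proofs observe that $\f_M \circ \tilde\pi$ and the given $\f$ are lifts of $\tilde f_M$, hence differ by an $S^1$-valued function $h$, and then replace $\f$ by the appropriate multiple to force the triangle to commute. The only cosmetic difference is that the paper writes $\f_M \circ \tilde\pi = h\f$ and replaces $\f$ by $h\f$, whereas you form the quotient $h = \f/\hat\f$ and replace $\f$ by $h^{-1}\f$; these are the same step.
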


\begin{proof}
We have chosen $\f$ such that the diagram
\[
\begin{tikzcd}[column sep=4em, row sep=4em]
\D  \ar{r}{\f}
    \ar{rd}[swap]{\tilde f_M}   &   S^5   \ar{d}{\pi}   \\
   &   \C P^2
\end{tikzcd}
\]
commutes, i.e., $\pi \circ \f = \tilde{f}_M$.  We also have the relations  $ f_M = \pi \circ \f_M$ 
and  $\tilde{f}_M = f_M \circ \tilde{\pi}$ and write $ \tilde{\f}_M = \f :\D \rightarrow S^5$.
 
 It suffices to prove that the diagram  
\[
\begin{tikzcd}[column sep=4em,row sep=4em]
\D  \ar{r}{\f}\ar{d}[swap]{\tilde \pi}   &   S^5   \\
 M   \ar{ur}[swap]{\f_M}    &  
\end{tikzcd}
\]
commutes. We observe $\pi \circ  \f_M  \circ \tilde{\pi} =  f_M \circ \tilde{\pi} = \tilde{\f}_M =  \pi \circ \f$. Therefore there exists a scalar function $h$ with values in $S^1$ such that 
 $  \f_M  \circ \tilde{\pi} = h \f$ holds. Thus by replacing $\f$ by $h\f$ we obtain the claim.
\end{proof}

In the rest of the paper we will always assume that the diagrams just considered all commute.
Also note that we have adjusted our notation so that objects defined on $\D$ usually have 
neither a $\tilde{.}$ nor the subscript $M$.

 We want to define a (moving) frame for $f_M$ and will build it  by using the lift 
  $\f :\D \rightarrow S^5$.

On $\D$ we will use the complex coordinates  $z= x+ i y$ and $\bar z = x- i y$, respectively.
 In the following, the subscripts $z$ and $\bar z$ denote the derivatives with respect to 
 $z$ and $\bar z$, respectively, defined via  the Cauchy-Riemann operators 
\[
 \partial_z  := \frac{1}{2}\left( \frac{\partial }{\partial x} - 
 i \frac{\partial }{\partial y}\right), \quad
  \partial_{\bar z}  := \frac{1}{2}\left( \frac{\partial }{\partial x} +
 i \frac{\partial }{\partial y}\right).
\]

Thus we obtain, e.g., 
\[
 \f_z = \partial_z \f := \frac{1}{2}\left( \frac{\partial \f}{\partial x} - 
 i \frac{\partial \f}{\partial y}\right), \quad
 \f_{\bar z} = \partial_{\bar z} \f := \frac{1}{2}\left( \frac{\partial \f}{\partial x} +
 i \frac{\partial \f}{\partial y}\right).
\]

The following definition is crucial for this paper
\begin{equation} \label{xiandeta}
\xi := \f_z - (\f_z\cdot \bar{\f}) \f \hspace{3mm} \mbox{ and} \hspace{3mm} \eta := \f_{\bar z} -
(\f_{\bar z}\cdot \bar{\f}) \f. 
\end{equation}

After substitution into (\ref{Fubini-Study-sphere}) the fact that the metric $g$ is conformal gives
\begin{eqnarray} 
&&\xi \cdot \bar \eta = \xi \cdot \bar \f = \eta \cdot \bar \f = 0, \label{fconf1}\\
&& e^{- \omega} \xi \cdot \bar \xi + e^{-\omega} \eta \cdot \bar \eta := a + b = 2,   \label{fconf2}
\end{eqnarray}
where we define 
\begin{equation}
a :=e^{-\omega} \xi \cdot \bar \xi \hspace{3mm} \mbox{ and} \hspace{3mm}  b := e^{-\omega}\eta \cdot \bar \eta .
\end{equation}
Writing temporarily $\xi = \xi[\f]$ and analogously for $\eta$ it is easy to check that 
$ \xi[ h\f]= h \xi[\f]$ and  $\eta[h\f] = h\eta[\f]$ hold for all functions $h:\D \rightarrow S^1$. Therefore 
$a$ and $b$ are independent of the choices of the local lift $\f$ and the complex 
coordinate $z$. Since $0 \leq a,b \leq 2$, we can define globally an invariant 
function $\theta : \D \rightarrow [0, \pi]$ by
\begin{equation}
\theta := 2 \arccos ( \sqrt{ \frac{a}{2}} ).
\end{equation}
It is easy to verify that the invariant $\theta$ defined above is exactly the K\"ahler angle of $f$, see for example 
\cite{Wang}.
In particular, $a = b $ is equivalent to $f$ being Lagrangian. In this case $a = b = 1$.
\begin{Definition}
A point $p \in \D$ is called holomorphic (anti-holomorphic or real respectively) for
$f: \D \rightarrow \C P^2$ if $\theta(p) = 0$ ($\pi$ or $\frac{\pi}{2}$ respectively).
A point is called a complex point of $f$, if it is holomorphic or anti-holomorphic.
\end{Definition}
Note that $p$ is a complex point of $f$ if and only if $a=0$ or $a = 2$.
As a consequence, $\xi=0$ or $\eta=0$, respectively.

In order to be able to describe all immersions into $\C P^2$ without complex points we introduce two more invariants:
\begin{align}
\Phi &:= e^{-\omega} \xi_{\bar z} \cdot \bar \eta dz := \phi dz,\\
\Psi &:=  \xi_{z} \cdot \bar \eta dz^3 := \psi dz^3.
\end{align}

Using (\ref{fconf1}) one can easily check that $\Phi$ and $\Psi$ are independent of the choices of a lift $\f$ of $f$ and the complex coordinate $z$ of $\D$ and thus are globally defined on the Riemann surface $\D$.

Moreover, if $f$ is transformed by an isometry $T \in \SU$ to $Tf$, then $\xi$ and $\eta$ are transformed by $T$ to $T\xi$ and $T\eta$ respectively.
 From the definitions it follows that $\Phi$ and $\Psi$ are 
$\SU$-invariant.

We call $\Psi$ the {\it cubic Hopf differential} and $\Xi = i(\Phi - \bar \Phi)$
the \emph{mean curvature form}. (Note, some authors call $\Phi$ the mean curvature form.)

\begin{Remark}
The definitions of $\Phi$ and $\Psi$ show that the complex points  of $f$ are zeros of 
$\Phi$ and $\Psi$. 
\end{Remark}


\subsection{The moving frame equations for surfaces without complex points}
 $f:\D \rightarrow \C P^2$ be a contractible surface without complex points and let 
$\f : \D \rightarrow S^5$ be a  lift of $f$. We define $\xi$ and $\eta$ by 
 (\ref{xiandeta}).
Then at each point of $\D$ we obtain a basis of $\C^3$ given by $\{\xi, \eta, \f \}$.

We combine these vectors to form a matrix
\[
\tilde{\mathcal{F}} = ( \xi, \eta,\f).
\]
Due to \eqref{fconf1}, \eqref{fconf2} and the fact that $\f \cdot \bar \f = 1$ holds, this matrix satisfies the two equations
\[ 
\tilde{\mathcal{F}}_z =  \tilde{\mathcal{F}} \tilde{\mathcal{U}}, \quad 
\tilde{\mathcal{F}}_{\bar z} =  \tilde{\mathcal{F}} \tilde{\mathcal{V}},
\]
where
\begin{align*}
 \tilde{\mathcal U} &= \begin{pmatrix} 
 a_z/a + \omega_z + \rho + a^{-1} \phi & -a^{-1} \bar{\phi} & 1\\
  b^{-1} e^{-\omega} \psi & \rho + b^{-1} \phi & 0\\
 0 & -be^\omega & \rho
 \end{pmatrix},  \;\;
  \tilde{\mathcal V} &=
 \begin{pmatrix} 
 - \bar \rho - a	^{-1} \bar \phi
 & -a^{-1} e^{-\omega} \bar \psi  & 0 \\
 b^{-1} \phi & b_{\bar z} /b + \omega_{\bar z} - \bar \rho -b^{-1} \bar{\phi}& 1\\
 - a e^\omega & 0 & - \bar \rho
\end{pmatrix}, 
\end{align*}
and where we have abbreviated
\begin{equation}\label{def:rho}
\rho = \f_z \cdot \bar \f.
\end{equation}
Note, if we can choose $\f$ as a \emph{horizontal lift}, i.e., satisfying  $d\f \cdot \bar{f} = 0$, then $\rho = 0$.

The integrability condition 
$$ \tilde{\mathcal U}_{\bar z} -  \tilde{\mathcal V}_z = [ \tilde{\mathcal U},  \tilde{\mathcal V}]$$
splits into the following four scalar conditions
\begin{align} 
&\rho_{\bar{z}} + {\bar{\rho}}_z = (a-b) e^\omega, \label{comp1}\\
&(\log a)_{z \bar{z}} + \omega_{z \bar{z}} = (b-2a) e^\omega - (a^{-1} \phi)_{\bar{z}} -
(a^{-1} \bar{\phi})_{z} - (ab)^{-1}|\phi|^2 +(ab)^{-1} e^{-2\omega}|\psi|^2 ,  \label{comp2}\\
&\psi_{\bar z} + (a^{-1} - b^{-1} ) \bar{\phi} \psi  +  (a^{-1} - b^{-1} ) e^\omega \phi^2 = e^\omega (\phi_z - \omega_z \phi) - e^\omega \phi (\log (ab))_z, \label{comp3}\\
&(\log b)_{z \bar{z}} + \omega_{z \bar{z}} = (a-2b) e^\omega + (b^{-1} \phi)_{\bar{z}} +
(b^{-1} \bar{\phi})_{z} - (ab)^{-1}|\phi|^2 +
 (ab)^{-1} e^{-2\omega}|\psi|^2.\label{comp4}
\end{align}

We note that (\ref{comp1}) is not essential.
By using the Dolbeault Lemma, see \cite[Theorem 13.2]{Forster}, we obtain

\begin{Proposition} \label{rho=rho0}
One can choose  without loss of generality $\f$ such that $\rho$ satisfies 
\begin{equation}
\partial_{\bar{z}} \rho = \frac{1}{2}  (a-b) e^\omega.
\end{equation}
\end{Proposition}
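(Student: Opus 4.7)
The plan is to exploit the ambiguity in the lift: any other lift of $f$ to $S^5$ differs by a smooth $h:\D\to S^1$, i.e.\ $\f'=h\f$. A direct computation using $h\bar h=1$ and $\f\cdot\bar\f=1$ gives
\[
\rho' \;=\; \f'_z\cdot\overline{\f'} \;=\; \bar h\,h_z + \rho.
\]
Writing $h=e^{iu}$ for a smooth real function $u$ on the simply-connected $\D$, we obtain $\bar h\,h_z = i\,u_z$, so the available gauge freedom is exactly $\rho\mapsto\rho+iu_z$ for arbitrary smooth real $u$. The condition $\partial_{\bar z}\rho' = \tfrac12(a-b)e^\omega$ then becomes
\[
i\,u_{z\bar z} \;=\; \tfrac12(a-b)e^\omega - \rho_{\bar z}.
\]

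The decisive point is that the right-hand side is purely imaginary. Indeed, the already established identity \eqref{comp1} reads $\rho_{\bar z}+\bar\rho_z=(a-b)e^\omega$, and since $\bar\rho_z=\overline{\rho_{\bar z}}$ this gives $\Re(\rho_{\bar z})=\tfrac12(a-b)e^\omega$. Hence the equation is equivalent to the real scalar equation
\[
u_{z\bar z} \;=\; -\Im(\rho_{\bar z}),
\]
i.e.\ $\tfrac14\Delta u = -\Im(\rho_{\bar z})$.

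To produce the required real $u$ on the simply-connected Riemann surface $\D$, I would proceed in two steps. First, by the Dolbeault lemma (as cited from Forster), choose a smooth complex-valued $w$ with $w_{\bar z} = -\Im(\rho_{\bar z})$. Since the right-hand side is real, $\bar w_z = \overline{w_{\bar z}} = w_{\bar z}$, which says exactly that the real one-form $\alpha:=w\,dz+\bar w\,d\bar z$ is closed. Simple-connectivity of $\D$ then yields a real primitive $u$ with $du=\alpha$, so $u_z=w$ and therefore $u_{z\bar z}=w_{\bar z}=-\Im(\rho_{\bar z})$. Replacing $\f$ by $e^{iu}\f$ produces a lift with the asserted property.

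The genuine content is the compatibility relation \eqref{comp1}: it is precisely this identity that renders the obstruction purely imaginary, so that a pure-phase gauge change can absorb it. Without \eqref{comp1} no adjustment of $\rho_{\bar z}$ by a lift of the form $h\f$ with $|h|=1$ could achieve the prescribed value.
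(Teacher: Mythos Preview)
Your proof is correct and follows essentially the same route as the paper: both arguments use the Dolbeault lemma together with the Poincar\'e lemma on the simply-connected domain $\D$, and both rely on \eqref{comp1} as the compatibility condition that makes the construction possible. The only difference is organizational: the paper first applies Dolbeault to produce a target function $\rho_0$ with $\partial_{\bar z}\rho_0=\tfrac12(a-b)e^\omega$, observes via \eqref{comp1} that $\Omega=i\{(\rho-\rho_0)\,dz-\overline{(\rho-\rho_0)}\,d\bar z\}$ is a closed real $1$-form, and then integrates to obtain the phase $\delta$; you instead solve directly for the phase $u$ by reducing to the Poisson-type equation $u_{z\bar z}=-\Im(\rho_{\bar z})$ and then carry out the same two-step Dolbeault-plus-integration argument. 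The substance is identical.
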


In this case we write $\rho_0$ instead of $\rho$.

\begin{proof}
Define $\rho_0$ as above. Then (\ref{comp1}) is equivalent to
\begin{equation}
(\rho - \rho_0)_{\bar{z}} + \overline{  (\rho - \rho_0)_{ \bar z }   } = 0.
\end{equation}
Moreover, $\Omega = i \{ (\rho - \rho_0)dz - \overline{(\rho - \rho_0)}d\bar z \}$
is a closed real 1-form.
Let $\delta : \D \rightarrow \R$ denote a solution to $d \delta = \Omega.$ 
Then the new lift $\tilde{\f} = e^{i \delta} \f$ satisfies the compatibility conditions above with 
$\tilde{\rho} = \rho_0$.
\end{proof}

\begin{Remark}
\mbox{}
\begin{enumerate}
 \item The result above hinges heavily on the fact, that multiplication of $\f$ by a scalar function with values in $S^1$ does not change $a, b, \omega, \phi$ and $\psi$, as was pointed out in a previous subsection.

\item  But we will also need that the diagrams in section \ref{susc:1.3} all commute.
To maintain this fact we will also need to adjust $\f_M$ by the same factor 
we have used for $\f$.
\end{enumerate}
\end{Remark}
For later use it will be convenient to bring the matrices $\tilde{\mathcal{U}}$  and 
$\tilde{\mathcal{V}}$  into a more \emph{symmetric} form.

For this purpose we consider
\begin{equation*} \label{gaugetoF}
\mathcal{F} = \tilde{\mathcal{F}} \mathcal{R},
\end{equation*}
where $\mathcal{R}$ denotes the diagonal matrix
\begin{equation*} \label{defR}
\mathcal{R} = \di( -ie^{-\frac{\omega}{2}} \sqrt{a}^{-1}, -ie^{-\frac{\omega}{2}} \sqrt{b}^{-1},1).
\end{equation*}
Then we obtain 
\begin{equation*} \label{MC1}
\mathcal{F}^{-1} d \mathcal{F} = \mathcal{U} dz + \mathcal{V} d \bar z,
\end{equation*}
where 
\begin{equation} \label{MCU}
 \mathcal U = \begin{pmatrix} 
 \frac{1}{2} \frac{a_z}{a} +  \frac{1}{2} \omega_z + \rho + a^{-1} \phi & -\sqrt{ab}^{-1}\bar{\phi} & 
 i\sqrt{a} e^{\frac{\omega}{2}}\\
  \sqrt{ab}^{-1} e^{-\omega} \psi &    - \frac{1}{2} \frac{b_z}{b} - \frac{1}{2} \omega_z +\rho+ b^{-1} \phi & 0\\
 0 & i\sqrt{b}e^{ \frac{\omega}{2}  }& \rho
 \end{pmatrix}, 
 \end{equation}

 \begin{equation} \label{MCV}
 \mathcal V =
 \begin{pmatrix} 
 - \frac{1}{2} \frac{a_{\bar z}}{a} -  \frac{1}{2} \omega_{\bar z} -\bar{ \rho} - a^{-1} \bar{\phi} & 
 -\sqrt{ab}^{-1} e^{-\omega} \bar{\psi}
 & 0 \\
 \sqrt{ab}^{-1} \phi &  \frac{1}{2} \frac{b_{\bar{z}}}{b}  + \frac{1}{2} \omega_{\bar{z}} -\bar{\rho}   - b^{-1} \bar{\phi}& i \sqrt{b} e^{\frac{\omega}{2} } \\
 i\sqrt{a} e^{\frac{\omega}{2} }  & 0 & - \bar \rho
\end{pmatrix}.
\end{equation}

Several  remarks are in place:

\begin{enumerate}
\item $\tilde{\mathcal{F}}$ and $\mathcal{F} = \tilde{\mathcal{F}} \mathcal{R}$ have the same last column.
As a consequence $\f = \tilde{\mathcal{F}} e_3 = \mathcal{F} e_3$. Hence a local lift of some immersion $f$ without complex points can be retrieved from both frames.

\item $\mathcal{V} = - \bar{\mathcal{U}}^T$.

\item $\tr (\mathcal{U} )= (a^{-1} + b^{-1}) \phi + 3 \rho + \frac{1}{2} (\frac{a_z}{a} -\frac{b_z}{b} )$.

\item $\tr (\mathcal{V} ) = - \overline{\tr (\mathcal{U} )} $.

\item 
$\mathcal{F}^{-1} d \mathcal{F} = \mathcal{U} dz + \mathcal{V} d \bar z$ is skew-hermitian.

\item If the initial condition for the solution to the equation $\mathcal{F}^{-1} d \mathcal{F} = \mathcal{U} dz + \mathcal{V} d \bar z$ is in $\U$, then the whole solution 
$\mathcal{F}$ is in $\U$. In particular, in this case $\det(\mathcal{F}) \in S^1.$

\item Sometimes we will need to know from which $\f$ the frame $\mathcal{F}$ has been constructed. In such a case we write  $\mathcal{F} =  \mathcal{F} (\f)$.
\end{enumerate}

With these pieces of information we obtain
\begin{Theorem}[Fundamental Theorem of liftable surfaces in $\C P^2$ without complex points, \cite{LW}] \label{FTL}
\mbox{}
\begin{enumerate}
 \item  Let $f_M:M \rightarrow \C P^2$ be a liftable  immersion without complex points and lift $\f_M: M \rightarrow S^5$. Let $g = 2 e^\omega dz d\bar{z}$ denote the induced metric, $\theta : \D \rightarrow (0, \pi )$ the K\"ahler angle, $\Psi$ the cubic form, $\Phi$ the mean curvature form (all defined from $\f$ as in section $\ref{susc:1.3}$). 
Set $a=2\cos\frac{\theta}{2}$, $b=2\sin\frac{\theta}{2}$ and $\rho=\rho_0$ is defined by \eqref{def:rho}.
Then  the conditions  \eqref{comp1}, 
\eqref{comp2},  \eqref{comp3} and   \eqref{comp4} are satisfied.

\item Conversely, let $g = 2 e^\omega dz d\bar{z}$ be a Riemannian metric on the simply connected Riemann surface $\D$. Let $\theta : \D \rightarrow (0,\pi)$ be a real valued function and $\Phi$ and $\Psi$ a $(1,0)$-form and a $(3,0)$-form  on $\D$ respectively. 
Set $a=2\cos\frac{\theta}{2}$, $b=2\sin\frac{\theta}{2}$ and $\rho=\rho_0$ is given in Proposition $\ref{rho=rho0}$.
If these data satisfy the conditions  \eqref{comp1},  \eqref{comp2},  \eqref{comp3} and   \eqref{comp4}, then there exists an immersion $f:\D \rightarrow \C P^2$ without complex points such that the given data have the meaning for $f$ as stated in $(1)$.
In particular, in this case the given data are the corresponding invariants for some lift $\f : \D \rightarrow S^5$ of $f$.

Moreover, if $\f$ descends to a map $\hat{\f}: \hat{M} \rightarrow S^5$ for some Riemann surface  $ \hat{M}$, then $\pi \circ \hat{\f} : \hat{M} \rightarrow \C P^2$  is a liftable immersion without complex points from $ \hat{M}$ to $\C P^2$.

\item  If two isometric surfaces $f: \D\rightarrow \C P^2$ and $\hat{f} : \hat{\D}\rightarrow \C P^2$ without complex points have the same forms $\Phi$ and $\Psi$ and the same K\"ahler function $\theta$, i.e., if there exists some diffeomorphism $\kappa : \D \rightarrow \hat{\D}$ such that $g = \kappa^* \hat{g}$, $\theta = \hat{\theta} \circ \kappa$, $\Phi = \kappa^* \hat\Phi$ and $\Psi = \kappa^*{\hat\Psi}$, then there exists 
an isometry $T \in \SU$ such that $f = T \circ \hat{f} \circ \kappa$.
\end{enumerate}
\end{Theorem}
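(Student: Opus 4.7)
\emph{Plan of proof.} The theorem has three parts and I treat them in order.

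\emph{(1)} This is essentially bookkeeping. Normalize $\f$ via Proposition~\ref{rho=rho0} so that $\rho=\rho_0$ and assemble $\tilde{\mathcal F}=(\xi,\eta,\f)$ on the universal cover $\D$. The structure equations $\tilde{\mathcal F}_z=\tilde{\mathcal F}\tilde{\mathcal U}$ and $\tilde{\mathcal F}_{\bar z}=\tilde{\mathcal F}\tilde{\mathcal V}$ follow at once from the definitions of $\xi,\eta,\phi,\psi$ together with the orthogonality relations \eqref{fconf1}--\eqref{fconf2}, since they merely express $\xi_z,\xi_{\bar z},\eta_z,\eta_{\bar z},\f_z,\f_{\bar z}$ in the basis $\{\xi,\eta,\f\}$ of $\C^3$. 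Because $\tilde{\mathcal F}$ is smooth, the mixed-partials identity $\tilde{\mathcal U}_{\bar z}-\tilde{\mathcal V}_z=[\tilde{\mathcal U},\tilde{\mathcal V}]$ is automatic, and reading off its independent scalar entries produces exactly \eqref{comp1}--\eqref{comp4}.

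\emph{(2)} The strategy is to reverse this construction by integrating a Maurer--Cartan form on the simply connected $\D$. From the abstract data $\omega,\theta,\Phi,\Psi$ (with $\rho=\rho_0$ supplied by Proposition~\ref{rho=rho0}), define $\mathcal U,\mathcal V$ by \eqref{MCU}--\eqref{MCV} and set $\alpha=\mathcal U\,dz+\mathcal V\,d\bar z$. The identity $\mathcal V=-\bar{\mathcal U}^T$ makes $\alpha$ skew-Hermitian; a direct entry-by-entry computation shows that the four conditions \eqref{comp1}--\eqref{comp4} are equivalent to the flatness $\mathcal U_{\bar z}-\mathcal V_z=[\mathcal U,\mathcal V]$. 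Fix a base point and an initial value in $\U$; integrability yields a unique $\mathcal F:\D\to\U$ with $\mathcal F^{-1}d\mathcal F=\alpha$. Define $\f:=\mathcal F e_3$ (which lies in $S^5$ since $\mathcal F$ is unitary) and $f:=\pi\circ\f$. To confirm that the invariants of $f$ recover the input data, undo the gauge: $\tilde{\mathcal F}:=\mathcal F\mathcal R^{-1}$ has $\f$ as third column, and by the definition \eqref{xiandeta} one verifies directly that its first two columns are $\xi,\eta$. Reading off the entries of $\tilde{\mathcal U},\tilde{\mathcal V}$ then returns the original $a,b,\omega,\phi,\psi$. The assumption $\theta\in(0,\pi)$ guarantees $a,b>0$, hence $\xi,\eta\neq 0$, so $f$ is an immersion without complex points. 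The descent statement is automatic, since the whole construction is canonical once the frame is fixed, so any covering symmetry of $\f$ projects down to $\hat M$.

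\emph{(3)} After normalizing both lifts via Proposition~\ref{rho=rho0}, the frames $\mathcal F,\hat{\mathcal F}$ have Maurer--Cartan forms assembled entirely from the invariants $g,\theta,\Phi,\Psi$, which by hypothesis agree under $\kappa$. Hence $\mathcal F$ and $\hat{\mathcal F}\circ\kappa$ satisfy the same linear matrix ODE on $\D$, so they differ by left multiplication by a constant $T\in\U$. Taking third columns gives $\f=T(\hat{\f}\circ\kappa)$, and projecting by $\pi$ yields $f=T\circ\hat f\circ\kappa$. Absorbing a constant scalar phase into the two lifts reduces $T$ to an element of $\SU$.

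\emph{Main obstacle.} The only step requiring genuine work is the verification in Part (2) that the flatness of $\alpha=\mathcal U\,dz+\mathcal V\,d\bar z$ decomposes exactly into the four equations \eqref{comp1}--\eqref{comp4}: this is a careful expansion of $[\mathcal U,\mathcal V]$ against $\mathcal U_{\bar z}-\mathcal V_z$ using the gauged matrices \eqref{MCU}--\eqref{MCV}. Part (1) is essentially the same calculation in reverse (performed before gauging by $\mathcal R$), so doing either direction once carefully supplies the other; Parts (1) and (3) are then purely formal.
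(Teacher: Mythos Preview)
The paper does not supply its own proof of this theorem; it is stated with attribution to \cite{LW} and no argument is given in the text. Your sketch is the standard Bonnet-type argument (derive the structure equations, integrate the flat Maurer--Cartan form on the simply connected domain, then compare frames for uniqueness) and is correct in outline.

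A few minor points worth tightening. First, the compatibility conditions \eqref{comp1}--\eqref{comp4} are extracted in the paper from the \emph{ungauged} matrices $\tilde{\mathcal U},\tilde{\mathcal V}$; your Part~(2) works with the gauged pair $\mathcal U,\mathcal V$. This is harmless since the gauge $\mathcal R$ is everywhere invertible and flatness is gauge-invariant, but it would be cleaner either to say so explicitly or to integrate $\tilde\alpha$ directly and then observe that the resulting $\tilde{\mathcal F}$ has orthogonal columns of the right lengths. Second, in Part~(2) the claim that the first two columns of $\tilde{\mathcal F}=\mathcal F\mathcal R^{-1}$ are exactly $\xi(\f),\eta(\f)$ (and not merely some unitary frame adapted to $\f$) needs a short computation of $\f_z,\f_{\bar z}$ from the last column of the structure equations; the paper carries out precisely this verification in the proof of Theorem~\ref{thm:primitive}, so you may simply point there. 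Third, in Part~(3) the reduction of $T$ from $\U$ to $\SU$ is fine: once both lifts are normalized so that $\rho=\rho_0$, each lift is determined up to a constant phase in $S^1$, and replacing $\hat\f$ by $c\hat\f$ replaces $T$ by $c^{-1}T$, so one can arrange $\det T=1$.
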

The following result will be particularly convenient.
\begin{Theorem}\label{det=1}
Let $f_M:M \rightarrow \C P^2$ be a liftable  immersion without complex points.
Then one can choose, without loss of generality, a  lift $\f_M: M \rightarrow S^5$ of $f$ such that  the corresponding frame $\mathcal{F}$ has determinant $1$. Such lift is called a special lift for $f_M$.
\end{Theorem}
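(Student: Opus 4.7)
The plan is to exploit the $S^1$-freedom in the choice of lift. Since any two lifts of $f_M$ differ by a smooth scalar $h:M\to S^1$, I will first compute how $\det\mathcal{F}(\f_M)$ transforms under $\f_M \mapsto h\f_M$ and then choose $h$ to be an appropriate cube root.

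First I would record that $\mathcal{F}(\f_M)\in\U$. Indeed, the orthogonality relations \eqref{fconf1} together with $\xi\cdot\bar\xi = ae^{\omega}$, $\eta\cdot\bar\eta = be^{\omega}$, $\f\cdot\bar\f = 1$ say that the columns of $\tilde{\mathcal{F}}=(\xi,\eta,\f)$ are Hermitian-orthogonal with known norms; the diagonal factor $\mathcal{R}=\di(-ie^{-\omega/2}/\sqrt{a},\,-ie^{-\omega/2}/\sqrt{b},\,1)$ is exactly what is needed to normalize these norms to $1$, so the columns of $\mathcal{F}=\tilde{\mathcal{F}}\mathcal{R}$ are orthonormal. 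Consequently $\det\mathcal{F}(\f_M)$ is a smooth $S^1$-valued function on $M$.

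Next I would compute the effect of rescaling. Using the already observed identities $\xi[h\f]=h\xi[\f]$ and $\eta[h\f]=h\eta[\f]$, and the fact that $\mathcal{R}$ depends only on the lift-independent invariants $\omega,a,b$, one gets $\tilde{\mathcal{F}}(h\f_M)=h\,\tilde{\mathcal{F}}(\f_M)$ and hence $\mathcal{F}(h\f_M)=h\,\mathcal{F}(\f_M)$. Taking determinants of the $3\times 3$ matrix yields
\[
\det\mathcal{F}(h\f_M) \;=\; h^{3}\,\det\mathcal{F}(\f_M).
\]
So it suffices to find $h:M\to S^{1}$ with $h^{3}=\overline{\det\mathcal{F}(\f_M)}$. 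On a simply connected $M$ (in particular on $\D$, the actual working setting of Sections~2--3), this is immediate: pick a smooth logarithm $i\mu$ of $\det\mathcal{F}(\f_M)$ and set $h=e^{-i\mu/3}$.

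The hard part, and the only genuine content beyond algebraic bookkeeping, is Step~3 for general $M$. The obstruction to a global cube root is the class of $\det\mathcal{F}(\f_M)$ in $H^{1}(M,\Z)\cong\mathrm{Hom}(\pi_{1}(M),\Z)$; if this class is not divisible by $3$, one must pass to the (at most) threefold covering of $M$ on which it becomes so, in exactly the spirit of the threefold covering construction used in Appendix~\ref{appendix} to handle liftability. This is the sense in which the statement is to be read \emph{without loss of generality}. After this adjustment is made, the unitary frame $\mathcal{F}$ lies in $\SU$ and the construction is complete.
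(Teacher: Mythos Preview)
Your argument is correct and follows the paper's own route: note that $\mathcal{F}\in\U$ so $\det\mathcal{F}$ takes values in $S^1$, use the scaling behaviour $\xi[h\f]=h\,\xi[\f]$, $\eta[h\f]=h\,\eta[\f]$ (hence $\det\mathcal{F}(h\f)=h^{3}\det\mathcal{F}(\f)$), and then extract a cube root of $\det\mathcal{F}$ on the simply connected domain $\D$. Your final paragraph about the obstruction on a general $M$ is in fact \emph{more} careful than the paper's proof, which works entirely on $\D$ (where the frame lives) and leaves only the loose instruction to ``adjust $\f_M$ as before''; the genuine descent question you raise is deferred to Appendix~\ref{appendix}, not handled in this proof.
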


\begin{proof}
First we note  that $\mathcal{F}$ is unitary and thus has determinant $\delta$ in $S^1$. Since $\mathcal{F}$ 
is defined on $\D$, we can take a cubic root of $\delta$. Let $h\in S^1$ denote the inverse of this cubic root. Then the property 
$ \xi[ h\f]= h \xi[\f]$ and  $\eta[h\f] = h\eta[\f]$ for all functions $h:\D \rightarrow S^1$ implies the claim, where we also need to adjust $\f_M$ as before. 
\end{proof}

\begin{Corollary}
Under the assumptions above one can specialize the lift $\f_M$ in two ways (by multiplication by a function with values in $S^1$) so that one can assume that $\rho = \rho_0$ holds, or so that $\mathcal{F} \in \SU$ holds.
\end{Corollary}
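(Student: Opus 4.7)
The plan is to observe that this Corollary is obtained by simply combining Proposition~\ref{rho=rho0} and Theorem~\ref{det=1}; each of those results delivers one of the two normalizations through multiplication of $\f$ (and correspondingly of $\f_M$) by an $S^1$-valued scalar. I would therefore present the argument as two parallel gauge constructions.

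For the normalization $\rho = \rho_0$, I would apply Proposition~\ref{rho=rho0} directly to the lift $\f:\D\to S^5$. That proposition yields a real function $\delta:\D\to\R$, and hence an $S^1$-valued factor $h_1 = e^{i\delta}$, such that the modified lift $h_1\f$ satisfies $\rho=\rho_0$. Since $h_1$ has values in $S^1$, the invariants $a, b, \omega, \phi, \psi$ are unaffected, so every formula established so far remains valid. To preserve the commutativity of the diagrams of Subsection~\ref{susc:1.3}, I would, following part~(2) of the Remark just above, also replace $\f_M$ by $h_1\f_M$; this is the ``adjustment of $\f_M$ by the same factor'' already flagged there.

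For the normalization $\mathcal{F}\in\SU$, I would invoke Theorem~\ref{det=1}. Because $\mathcal{F}$ is unitary, $\det\mathcal{F}$ takes values in $S^1$, and because $\D$ is simply connected a global cubic root $h_2$ of $\overline{\det\mathcal{F}}$ exists on $\D$. Setting $\tilde\f = h_2\f$ then produces a frame with $\det\mathcal{F}(\tilde\f)=1$, and, as above, $\f_M$ is multiplied by the same $S^1$-valued factor to keep the lifting diagrams commutative.

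I do not expect any real obstacle: the proof is essentially a bookkeeping combination of the two previous results, with the only delicate point being the simultaneous adjustment of $\f_M$. The reason for the word ``or'' in the statement is that the two gauge factors $h_1$ and $h_2$ are, in general, distinct — $h_1$ is the exponential of the primitive of a closed real $1$-form built from $a, b, \omega$, while $h_2$ is a cubic root of a unitary determinant depending on the whole frame — so that one normally cannot impose both conditions simultaneously, and each is to be viewed as a separate admissible specialization of the lift.
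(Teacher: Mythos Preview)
Your proposal is correct and matches the paper's approach: the Corollary is stated without proof in the paper precisely because it is an immediate combination of Proposition~\ref{rho=rho0} and Theorem~\ref{det=1}, each providing one of the two $S^1$-gauge normalizations. Your closing remark that the two normalizations are in general incompatible is also exactly what the paper emphasizes immediately after the Corollary.
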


From here on we will always assume that $\mathcal{F} \in \SU$.
It is important to note that now $\rho$ can not be assumed to have the special form $\rho_0$.

\begin{Corollary}
Under the assumptions above the trace of the Maurer-Cartan form of $\mathcal{F}$ vanishes identically.
\end{Corollary}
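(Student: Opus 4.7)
The plan is to derive the vanishing of $\operatorname{tr}(\mathcal{F}^{-1}d\mathcal{F})$ directly from the condition $\mathcal{F} \in \SU$, using the familiar correspondence between the defining conditions on a Lie group and the trace-type conditions on its Lie algebra.

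First, I would recall that by Theorem \ref{det=1} together with the preceding Corollary, we have arranged the lift $\f_M$ (and correspondingly the frame $\mathcal{F}$) so that $\mathcal{F}$ actually takes values in $\SU$, meaning $\det \mathcal{F} \equiv 1$ identically on $\D$. This is the only input I need.

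Next, I would apply Jacobi's formula for the exterior derivative of the determinant of a matrix-valued smooth function:
\[
d(\det \mathcal{F}) \;=\; \det(\mathcal{F})\,\operatorname{tr}\!\left(\mathcal{F}^{-1} d\mathcal{F}\right).
\]
Since $\det \mathcal{F} \equiv 1$, the left-hand side vanishes; since $\det \mathcal{F} = 1 \neq 0$, dividing yields $\operatorname{tr}(\mathcal{F}^{-1}d\mathcal{F}) = 0$. Equivalently, one may simply observe that $\mathcal{F}^{-1}d\mathcal{F}$ takes values in the Lie algebra $\su = \operatorname{Lie}(\SU)$, which consists of traceless (skew-Hermitian) matrices.

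There is no real obstacle here; the statement is essentially a restatement of $\det \mathcal{F} \equiv 1$ at the infinitesimal level. The only point worth flagging in the write-up is that the three remarks preceding the corollary already gave the explicit expression $\operatorname{tr}(\mathcal{U}) = (a^{-1}+b^{-1})\phi + 3\rho + \tfrac12(a_z/a - b_z/b)$ and $\operatorname{tr}(\mathcal{V}) = -\overline{\operatorname{tr}(\mathcal{U})}$, so the corollary should be understood as asserting that the particular choice of lift underlying $\mathcal{F} \in \SU$ forces these expressions (and their complex conjugates) to cancel identically, even though $\rho$ itself is no longer normalized to the specific form $\rho_0$ of Proposition \ref{rho=rho0}.
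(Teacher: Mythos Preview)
Your proof is correct and matches the paper's intent: the Corollary is stated without proof there, as it is an immediate consequence of $\mathcal{F}\in\SU$ via the standard fact that the Maurer-Cartan form takes values in the Lie algebra $\su$, which is traceless. Your use of Jacobi's formula makes this explicit, and your closing remark about $\rho$ no longer being $\rho_0$ accurately reflects the paper's own comment following the previous Corollary.
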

In the case of minimal Lagrangian surfaces, $a=b=1$ and $\Phi\equiv0$, thus one can have a horizontal lift $\f$, i.e., $\rho=0$ and the trace of the Maurer-Cartan form of $\mathcal{F}$ vanishes automatically.


\section{Algebraic digression}\label{sc:alg}
In this section we discuss briefly the algebraic setting which will be used in the following sections.
\subsection{The automorphism $\sigma$}
We consider the order $6$ automorphism  $\sigma$ of the Lie algebra  
$\mathfrak{g} =  \sl$,
given by 
\begin{equation} \label{defsigma}
\sigma ( X) =   - P X^T P,  \quad \mbox{where} \quad 
P = 
\begin{pmatrix}
0& \epsilon^2 & 0\\
\epsilon^4& 0 & 0\\
0&0&1\\
\end{pmatrix}
\end{equation}
with $\epsilon = e^{\frac{ i\pi}{3}}$.
We also consider the connected Lie group $G =  \SL$ and the automorphism $\sigma^G$ of order $6$
\begin{equation}\label{eq:sigmaonG}
\sigma^G(g) =  P (g^{T})^{-1}P.
\end{equation}
Then $\sigma$ is the differential of $\sigma^G$ and we will use from now on the same notation for both homomorphisms.
 By abuse of notation we will also write $\sigma^G$ by $\sigma$.
We would like to point out that $\sigma$ is an outer automorphism of $\mathfrak{g}$.

By a simple computation  we obtain
\begin{equation}\label{eq:sigma2}
\sigma^2 (X) = P_2 X P_2^{-1},\quad \mbox{with} \quad 
 P_2 = \di ( \epsilon^4, \epsilon^2, 1).
\end{equation}
The formula on the group level is the same.
We would like to point out that $\sigma^2$ is an inner automorphism.

Finally we derive
\begin{equation}\label{eq:sigma3}
\sigma^3 ( X) =  -P_3 X^T P_3,
\quad \mbox{with} \quad 
P _3= 
\begin{pmatrix}
0& 1 & 0\\
1& 0 & 0\\
0&0&1\\
\end{pmatrix}.
\end{equation}
We would like to point out that $\sigma^3$ is an outer automorphism of $\mathfrak{g}$.
In the context of certain  surface classes  we will discuss the automorphisms 
$\sigma$, $\sigma^2$ and $\sigma^3$ of $\mathfrak{g}$.

Since, the eigenspaces of the various powers of $\sigma$ all can be derived from the eigenspaces of $\sigma$, we discuss this case first.
Explicitly the eigenspaces $\mathfrak{g}_k$ of $\sigma$ with respect to the eigenvalue $\epsilon^k$ in $\sl$ are given as follows
\begin{align*}
\mathfrak{g}_0&=\left\{
                    \begin{pmatrix}
                    a &  &  \\
                     & -a &  \\
                     &  & 0 \\
                \end{pmatrix}
                \mid a\in \C
\right\},
\quad
\mathfrak{g}_1=\left\{\begin{pmatrix}
                    0 & b & 0 \\
                     0 & 0 & a \\
                     a & 0 & 0 \\
                  \end{pmatrix}\mid a,b\in\C
\right\},
\\
\mathfrak{g}_2&=\left\{
                  \begin{pmatrix}
                    0 & 0& a  \\
                     0& 0 & 0 \\
                     0& -a & 0 \\
                  \end{pmatrix} \mid a\in \C
              \right\},
\quad
\mathfrak{g}_3=\left\{
                  \begin{pmatrix}
                    a &  &  \\
                      & a &  \\
                      &  & -2a \\
                  \end{pmatrix} \mid a\in \C
              \right\},\\
\mathfrak{g}_4&=\left\{
                  \begin{pmatrix}
                    0 & 0 & 0  \\
                     0& 0 & a \\
                     -a & 0 & 0 \\
                  \end{pmatrix} \mid a\in \C
            \right\},
\quad
\mathfrak{g}_5=\left\{
                  \begin{pmatrix}
                    0 & 0 & a \\
                     b & 0 & 0 \\
                     0 & a & 0 \\
                  \end{pmatrix} \mid a, b\in \C 
                \right\}.
\end{align*}

The eigenspaces of $\sigma^2$ are 
\begin{itemize}
\item $\mathfrak{g}_1 + \mathfrak{g}_4$ for the eigenvalue  $\epsilon^2$,
\item  $\mathfrak{g}_2 + \mathfrak{g}_5$ for the eigenvalue  $\epsilon^4$,
\item $\mathfrak{g}_3 + \mathfrak{g}_0$ for the eigenvalue  $1 $.
\end{itemize}
The eigenspaces for $\sigma^3$ are
\begin{itemize}
\item $\mathfrak{g}_4 + \mathfrak{g}_2 + \mathfrak{g}_0$ for the eigenvalue  $1$,
\item $\mathfrak{g}_1 + \mathfrak{g}_3 + \mathfrak{g}_5$ for the eigenvalue  $\epsilon^3 =  -1 $.
\end{itemize}

\subsection{The real form involution $\tau$}\label{subsc:realform}
 The real form $\mathfrak g^{\R}=\su$ of $\mathfrak g = \sl$ is given by the anti-linear involution 
\begin{equation*}\label{eq:tau}
 \tau (X) = - \bar X^T, \quad X \in \sl.
\end{equation*}
 We also consider the anti-linear involution $\tau^G$ on $G = \SL$ 
\begin{equation}\label{eq:tauonG0}
 \tau^G (g) = (\bar g^T)^{-1}, \quad g \in \SL.
\end{equation} 
 By abuse of notation we will also write $\tau^{G}$ by $\tau$.
 Then a direct computation shows that $\sigma$ in \eqref{defsigma}
 and $\tau$ commute, i.e.,
$\tau \circ \sigma =  \sigma \circ \tau$,
 thus $\tau$ and the eigenspaces of $\sigma$ have the relation 
\begin{equation}\label{eq:tausigma}
\tau (\mathfrak g_j) = \mathfrak g_{-j}, \quad j=0, 1, \dots, 5.
\end{equation}
 In particular $\mathfrak g_0$ and $\mathfrak g_0 \oplus \mathfrak g_3$ are subalgebras of $\mathfrak {su}_3$ with the obvious complexifications.

\subsection{$k$-symmetric spaces}
 As we discussed in section \ref{subsc:realform}, $\sigma$ and $\tau$
 commute, and thus we arrive at a definition of $k$-symmetric  spaces, as it will be used in our paper.
 
 \begin{Definition}\label{def:k-symmetric}
 Let $G^{\R}/G^{\R}_0$ be a real homogeneous space such that $G^{\R}$ is a 
 real form of a complex Lie group $G$ given by a real form involution 
 $\tau$, that is, $G^{\R} = \Fi (G, \tau)$.
 Moreover,  let $\sigma$ 
 be an order $k \;(k\geq 2)$ automorphism  of $G,$ leaving  $G^{\R}$ invariant and commuting with $\tau$.
 Then  $G^{\R}/G^{\R}_0$ is called a \emph{$k$-symmetric space} if 
 the following condition is satisfied
\begin{equation}
 \Fi(G^{\R},\sigma)^\circ 
 \subset G^{\R}_0 \subset \Fi (G^{\R}, \sigma),
\end{equation}
 where $\Fi(G^{\R}, \sigma)^\circ$ denotes the identity component of $\Fi(G^{\R}, \sigma)$.
\end{Definition}

\subsection{Primitive maps and the loop group formalism}
In the last subsection we have discussed complex Lie algebras and Lie groups.
For the applications to geometry we will need to work with real Lie groups.

Thus we consider a complex Lie group as before and let $\tau$ denote an 
anti-holomorphic involution of $G$. Then we put
\begin{equation*}
G^\R = \Fi(G,\tau).
\end{equation*}

Similarly we define $\operatorname{Lie}G^\R = \mathfrak{g}^\R.$
\begin{Definition}
Let $\kappa$ be any automorphism of $\mathfrak{g}$ of finite order $k > 2$.
Let $\mathfrak{g}_m$ denote the  eigenspaces of $\kappa$, where we choose $m \in \mathbb{Z}$ and actually work with $m \mod k$.
Let $\mathcal{F} : \D \rightarrow G$ be a smooth map. Then $\mathcal{F}$ will be called \emph{primitive relative to $\kappa$} if
\begin{equation*}
\mathcal{F}^{-1} d\mathcal{F} = \alpha_{-1}  dz + \alpha_0^{\prime} dz+\alpha_0^{\prime\prime} d\bar{z}+ \alpha_1 d\bar{z} \in \mathfrak{g}_{-1} +  \mathfrak{g}_0 + \mathfrak{g}_1,
\end{equation*}
where  $\alpha_m$, $\alpha_0^{\prime}$ and $\alpha_0^{\prime\prime}$ take values in  an eigenspace  
$\mathfrak{g}_m$ of  $\kappa$.
\end{Definition}
By abuse of notation we will also write $\alpha_0 =  \alpha_0^\prime dz + \alpha_0^{\prime \prime}d \bar{z}$.
\begin{Lemma}
Let $\mathcal{F}$  be primitive relative to $\kappa$ and let us write
$\mathcal{F}^{-1} d\mathcal{F} = \alpha_{-1} dz+ \alpha_0 + \alpha_1 d \bar{z}$.
Then
$\lambda^{-1} \alpha_{-1} dz+ \alpha_0 + \lambda \alpha_1d \bar{z}$
is integrable for all $\lambda \in \C^*$.
\end{Lemma}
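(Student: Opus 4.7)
The plan is to write down the Maurer-Cartan equation for the deformed 1-form $\omega_\lambda := \lambda^{-1}\alpha_{-1}\,dz + \alpha_0 + \lambda\alpha_1\,d\bar z$, expand it as a Laurent polynomial in $\lambda$, and show that each coefficient vanishes. Integrability here means $d\omega_\lambda + \omega_\lambda \wedge \omega_\lambda = 0$, since $\mathcal{D}$ is simply-connected. Writing $\omega_\lambda = A_\lambda\,dz + B_\lambda\,d\bar z$ with $A_\lambda = \lambda^{-1}\alpha_{-1} + \alpha_0'$ and $B_\lambda = \alpha_0'' + \lambda\alpha_1$, this equation becomes $\partial_z B_\lambda - \partial_{\bar z} A_\lambda + [A_\lambda, B_\lambda] = 0$.

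Expanding and collecting by powers of $\lambda$ yields three conditions:
\begin{align*}
\lambda^{-1}: &\quad -\partial_{\bar z}\alpha_{-1} + [\alpha_{-1},\alpha_0''] = 0,\\
\lambda^{0}: &\quad \partial_z \alpha_0'' - \partial_{\bar z}\alpha_0' + [\alpha_0',\alpha_0''] + [\alpha_{-1},\alpha_1] = 0,\\
\lambda^{1}: &\quad \partial_z \alpha_1 + [\alpha_0',\alpha_1] = 0.
\end{align*}
Here I would use the key fact that $\kappa$ is an automorphism, so $[\mathfrak{g}_i,\mathfrak{g}_j]\subseteq\mathfrak{g}_{i+j \bmod k}$. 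Thus the $\lambda^{-1}$-coefficient lies in $\mathfrak{g}_{-1}$, the $\lambda^{0}$-coefficient lies in $\mathfrak{g}_{0}$, and the $\lambda^{1}$-coefficient lies in $\mathfrak{g}_{1}$.

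Now comes the main point: because $\mathcal{F}$ is an actual map into $G$, its Maurer-Cartan form $\mathcal{F}^{-1}d\mathcal{F}$ automatically satisfies the integrability condition, i.e.\ the specialization $\lambda = 1$ of the above expression is zero. That means the sum of the three coefficients above is zero. Since $k > 2$, the eigenvalues $\epsilon^{-1}, 1, \epsilon$ are pairwise distinct, so the eigenspaces $\mathfrak{g}_{-1},\mathfrak{g}_0,\mathfrak{g}_1$ are linearly independent in $\mathfrak{g}$. Therefore each of the three coefficients must vanish separately.

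Once each $\lambda^m$-coefficient is zero individually, the Maurer-Cartan equation for $\omega_\lambda$ holds for every $\lambda\in\C^*$, which is the desired integrability. The only subtlety to address carefully is the grading argument in the last step: the primitivity hypothesis $k>2$ is essential to separate the eigenspaces $\mathfrak{g}_{-1}$ and $\mathfrak{g}_1$ (which would coincide if $k=2$), and this is precisely the obstacle that makes the standard $\lambda$-deformation work.
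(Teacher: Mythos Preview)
Your proof is correct and follows exactly the approach the paper intends: the paper's own proof consists of the single sentence ``Together with a straightforward computation one needs to use that because of $k>2$ the sum $\mathfrak{g}_{-1} + \mathfrak{g}_0 + \mathfrak{g}_1$ of eigenspaces is direct,'' and what you have written is precisely that straightforward computation spelled out in full, together with the directness argument.
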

\begin{proof}
Together with a  straightforward computation one needs to use that because of $k>2$ the sum $\mathfrak{g}_{-1} + \mathfrak{g}_0 + \mathfrak{g}_1$ of eigenspaces is direct.
\end{proof}
The importance of this observation has been elaborated on and explained in
\cite[Section 3.2]{BP} and \cite{B}.
\begin{Theorem}[\cite{BP, B}] \label{theoremharmonicity}
Let $G$ be a complex Lie group,   $\sigma$ an automorphism of $G$ of finite order $k \geq 2$
and $\tau$ an anti-holomorphic involution of $G$ which commutes with  $\sigma$.
Let $G^\R_0$ be any Lie subgroup of $G^\R$ satisfying 
 $\Fi (G^\R, \sigma)^{\circ} \subset {G^\R_0} \subset \Fi (G^\R, \sigma)$.
Then we consider the $k$-symmetric space $G^\R / {G^\R_0}$ together with the (pseudo-)Riemannian structure induced by some bi-invariant (pseudo-)Riemannian structure on $G^\R$. 
Let $h:\D \rightarrow G^\R / {G^\R_0}$ be a 
smooth map and 
$\mathcal{F} : \D \rightarrow G^\R$ a frame for $h$, i.e., $h = \pi \circ \mathcal{F}$, where
$\pi: G^\R \rightarrow G^{\R}/{G^\R_0}$ denotes the canonical projection.

Then the following statements hold:
\begin{enumerate}
 \item   If $k=2$, then $h$ is harmonic if and only if  $\lambda^{-1} \alpha_{-1} dz+ \alpha_0 + \lambda \alpha_1 d\bar{z}$ is integrable for all $\lambda \in \C^*$.

\item If $k>2$, then $h$ is harmonic  if $\mathcal{F}$ is primitive relative 
to  $ \sigma$.
\end{enumerate}
\end{Theorem}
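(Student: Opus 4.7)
The plan is to compute $d\alpha_\lambda + \tfrac12[\alpha_\lambda \wedge \alpha_\lambda]$ for the loop family
$\alpha_\lambda := \lambda^{-1}\alpha_{-1}dz + \alpha_0 + \lambda\alpha_1 d\bar z$,
expand it as a Laurent polynomial in $\lambda$, and compare the resulting coefficients with the Maurer--Cartan equation $d\alpha + \tfrac12[\alpha\wedge\alpha]=0$ for $\alpha = \mathcal{F}^{-1}d\mathcal{F}$ (which holds automatically) and with the harmonic-map equation for $h = \pi\circ\mathcal{F}$. Writing $A = \alpha_{-1}dz$, $B = \alpha_1 d\bar z$, $C = \alpha_0$ and using $[A\wedge A] = [B\wedge B] = 0$, expansion gives
\begin{equation*}
 d\alpha_\lambda + \tfrac12[\alpha_\lambda \wedge \alpha_\lambda] = \lambda^{-1}\bigl(dA + [A\wedge C]\bigr) + \bigl(dC + [A\wedge B] + \tfrac12[C\wedge C]\bigr) + \lambda\bigl(dB + [C\wedge B]\bigr).
\end{equation*}
Hence $\alpha_\lambda$ is integrable for every $\lambda\in\C^*$ if and only if the three identities
\begin{equation*}
\text{(i)}\ dA + [A\wedge C] = 0, \qquad \text{(ii)}\ dC + [A\wedge B] + \tfrac12[C\wedge C] = 0, \qquad \text{(iii)}\ dB + [C\wedge B] = 0
\end{equation*}
all hold simultaneously.

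Next I will identify the harmonic-map equation for $h$. For a frame of a map into a reductive homogeneous space with reductive decomposition $\mathfrak{g}^{\R} = \mathfrak{h} \oplus \mathfrak{m}$, where $\mathfrak{h} = \Lie G^{\R}_0$, harmonicity is equivalent to $d\!\star\!\alpha_{\mathfrak{m}} + [\alpha_{\mathfrak{h}} \wedge \!\star\!\alpha_{\mathfrak{m}}] = 0$. With $\alpha_{\mathfrak{m}} = A + B$ and $\alpha_{\mathfrak{h}} = C$, using $\star dz = -i\,dz$, $\star d\bar z = i\,d\bar z$ and $[C\wedge A] = [A\wedge C]$ (valid for Lie-algebra-valued $1$-forms), this reduces to
\begin{equation*}
 \bigl(dB + [C\wedge B]\bigr) - \bigl(dA + [A\wedge C]\bigr) = 0,
\end{equation*}
i.e., precisely (iii) $-$ (i) $= 0$.

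The last step is to compare (i), (ii), (iii) with the eigenspace projections of the Maurer--Cartan equation for $\alpha$. If $k > 2$ and $\mathcal{F}$ is primitive, then $A \in \mathfrak{g}_{-1}$, $C \in \mathfrak{g}_0$, $B \in \mathfrak{g}_1$ lie in \emph{distinct} eigenspaces, and the grading $[\mathfrak{g}_j,\mathfrak{g}_m] \subset \mathfrak{g}_{j+m}$ splits the Maurer--Cartan equation cleanly into projections onto $\mathfrak{g}_{-2}, \mathfrak{g}_{-1}, \mathfrak{g}_0, \mathfrak{g}_1, \mathfrak{g}_2$: the $\pm 2$ pieces vanish automatically, while the remaining three projections are exactly (i), (ii), (iii). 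In particular (iii) $-$ (i) $=0$, so $h$ is harmonic. If $k = 2$, however, $\mathfrak{g}_{-1} = \mathfrak{g}_1$, and Maurer--Cartan yields only (ii) (the $\mathfrak{g}_0$-projection) together with the sum of the left-hand sides of (i) and (iii) (the single $\mathfrak{g}_1$-projection). Since Maurer--Cartan holds automatically, the only remaining content of (i), (ii), (iii) is the difference (iii) $-$ (i), which is precisely harmonicity, giving the ``iff''.

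The main obstacle is the $k=2$ case: the grading collapse prevents the clean eigenspace-by-eigenspace splitting used for $k>2$, so one must invoke the Hodge-star formulation of harmonicity to separate the combined $\mathfrak{g}_1$-projection of Maurer--Cartan into its $(1,0)$- and $(0,1)$-halves. Careful tracking of signs in the Hodge-star calculation and of the wedge-bracket symmetry for $1$-forms is also required.
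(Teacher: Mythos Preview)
The paper does not supply its own proof of this theorem; it is stated with attribution to Burstall--Pedit and Black, and the only argument given in the surrounding text is the one-line proof of the preceding Lemma (that for $k>2$ the loop family $\lambda^{-1}\alpha_{-1}dz+\alpha_0+\lambda\alpha_1 d\bar z$ is integrable, using directness of $\mathfrak{g}_{-1}\oplus\mathfrak{g}_0\oplus\mathfrak{g}_1$). So there is no in-paper proof to compare against.

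Your argument is the standard one from those references and is essentially correct. A couple of points worth tightening. First, the form of the harmonic map equation you invoke, $d\!\star\alpha_{\mathfrak m}+[\alpha_{\mathfrak h}\wedge\star\alpha_{\mathfrak m}]=0$, is valid because the metric on $G^{\R}/G^{\R}_0$ is assumed to come from a bi-invariant metric on $G^{\R}$, making the space naturally reductive; in that case the extra $[\alpha_{\mathfrak m},\alpha_{\mathfrak m}]_{\mathfrak m}$-type torsion term in the tension field vanishes under the trace. You use this implicitly, and it would be worth stating. Second, for part (2) with $k>2$ you should remark that since $\mathcal F$ is primitive, the $\mathfrak m$-part of $\alpha$ is exactly $A+B$ (no components in $\mathfrak g_{\pm 2},\dots$), so your identification $\alpha_{\mathfrak m}=A+B$ is legitimate; this is where the primitivity hypothesis actually enters. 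With those two clarifications, your eigenspace bookkeeping and the $(1,0)/(0,1)$ splitting via the Hodge star give a clean and complete proof.
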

 From the above theorem, we have the following definition.
\begin{Definition}
 Retain the notation in Theorem \ref{theoremharmonicity}.
\begin{enumerate}
 \item 
 The frame $\mathcal F$ is called \emph{primitive harmonic}, if
 $\mathcal{F}^{-1} d\mathcal{F} = 
 \alpha_{-1}dz  + \alpha_0 + \alpha_1 d \bar{z} $ such that 
 $\lambda^{-1} \alpha_{-1} dz+ \alpha_0 + \lambda \alpha_1 d\bar{z}$ 
 is integrable for all $\lambda \in \C^*$.
\item  The map $h$ is called 
 \emph{primitive harmonic map}, if the frame $\mathcal F$ is 
 primitive harmonic.
\end{enumerate}
\end{Definition}

This admits a direct application of the \emph{loop group method} (see \cite{DPW} for the basic formalism, presented in loc.cit. for compact groups.)

The first step here is to integrate
\begin{equation*}
\mathcal{F_\lambda}^{-1} d\mathcal{F_\lambda} = 
\lambda^{-1} \alpha_{-1} dz+ \alpha_0 + \lambda \alpha_1d \bar{z}.
\end{equation*}

Since $\tau$ maps $\mathfrak{g}_m$ to $\mathfrak{g}_{-m}$, we can assume that
$\mathcal{F}_\lambda$ is contained in $G^\R$ for all $\lambda \in S^1$.
Note that we will write $\mathcal{F}(z,\lambda)$ or  $\mathcal{F}_\lambda(z)$, whatever is most convenient.
We will usually also assume $\mathcal{F}(z_0,\lambda) = I$ for a once and for all fixed base point $z_0$.

Then it follows from the above that also $h_\lambda = \mathcal{F}_\lambda \mod G^\R_0$ is a primitive harmonic map with frame $\mathcal{F}_\lambda$.
Usually  $\mathcal{F}_\lambda$ is called \emph{an extended frame} for $h$.

The loop group method constructs in principle all these extended frames.
For this one does not read $\mathcal{F}(z,\lambda) $ as a family of frames, 
parametrized by $\lambda$, but as a function of $z$ into some loop group.

Here are the basic definitions:

\begin{enumerate}
\item $\Lambda G = \{ g: S^1 \rightarrow G \}.$

\noindent Considering $G$ as a subgroup of some matrix algebra $\operatorname{Mat} (n,\C)$  we use  the Wiener norm on $\Lambda \operatorname{Mat} (n,\C)$ and thus induce a Banach Lie group structure on $ \Lambda G $.

\item $\Lambda^+G = \left\{g \in G\mid 
 \begin{array}{l}
  \mbox{$g$ has  a holomorphic extension to the open unit disk}\\
  \mbox{and $g^{-1}$ has the same property}
 \end{array} 
 \right\}$.

\item $\Lambda_*^+G = \{ g \in  \Lambda^+G \mid g(0) = I \}.$

\item $\Lambda^-G = \left\{g \in G \mid  \begin{array}{l}
 \mbox{$g$ has  a holomorphic 
extension to the open  upper unit disk in $\C P^1$}  \\

 \mbox{and  $g^{-1}$ has the same property} 
\end{array}
\right\}.$

\item $\Lambda_*^-G = \{ g \in  \Lambda^-G \mid g(\infty) = I \}.$

\item  $\Lambda G^\R = \{g \in \Lambda G\mid \tau (g(\lambda) )= g(\lambda) \mbox{ for all} \hspace{1mm} \lambda \in S^1\}$.
\end{enumerate}
Finally, we will actually always use \emph{twisted subgroups} of the groups above.
First we have
\[
\Lambda G_{\sigma}= 
\left\{ 
 g \in \Lambda G\mid  
\mbox{$\sigma( g( \epsilon^{- 1} \lambda)) = g(\lambda)$  for all $\lambda \in S^1$} 
\right\}.
\]
The other twisted groups are defined analogously, like 
\[
 \Lambda_*^+G _{\sigma} = 
\Lambda_*^+G  \cap \Lambda G_{\sigma}.
\]
By the form of  $\mathcal{F_\lambda}^{-1} d\mathcal{F_\lambda}$ we infer that all the loop matrices associated with geometric quantities are actually defined for all $\lambda \in \C^*$.
However, geometric interpretations are usually only possible for $\lambda \in S^1$.

To understand the construction procedure mentioned above one considers next
again $h$ and $\mathcal{F}$ as above and  decomposes 
\begin{equation*}
\mathcal{F}(z,\lambda) = \mathcal{C}(z,\lambda) \cdot \mathcal{L}_+(z,\lambda),
 \end{equation*}
where $\mathcal{C}$ is holomorphic in $z \in \D$ and holomorphic in $\lambda \in \C^*$
and $\mathcal{L}_+(z,\lambda) \in \Lambda ^+G _{\sigma} $.

Since $S^2$ does not occur in this paper as domain of a harmonic map, such a decomposition is always possible, 
and defines \emph{a holomorphic potential $\eta$ for $h$} by the formula
\[
\eta = \mathcal{C}^{-1} d\mathcal{C}. 
\]
The potential $\eta$ takes the form
\begin{equation} \label{eta}
\eta = \lambda^{-1} \eta_{-1}(z) dz + \lambda^{0} \eta_{0}(z) dz +\lambda^{1} \eta_{1}(z) dz
+\lambda^{2} \eta_{2}(z) dz + \cdots.
\end{equation}
We would like to emphasize:
\begin{enumerate}
\item All coefficient functions $\eta_j$ are holomorphic on $\D$.
\item All  $\eta_j$ are contained in $\mathfrak{g}_{m(j)}$, where $m(j) = 0,1,2,\dots,k-1$ and $m(j) \equiv j \mod k$.
\end{enumerate}
This explains the procedure to obtain a holomorphic potential from a primitive harmonic map.
The fortunate point is that this procedure can be reversed.

The following theorem  is a straightforward generalization of a result of  \cite{DPW}.
\begin{Theorem} [Loop group procedure, \cite{DPW}]
Let $G, \sigma$ and $ \tau $ as above.
 Let  $h:\D \rightarrow G^\R / {G^\R_0}$ 
be a primitive harmonic map with extended frame $\mathcal{F}_\lambda$.
Define $\mathcal{C}$ by $\mathcal{F}(z,\lambda) = \mathcal{C}(z,\lambda) \cdot \mathcal{L}_+(z,\lambda)$ and put $\eta = \mathcal{C}^{-1} d\mathcal{C}$.
Then $\eta$ has the form stated in \eqref{eta}, the coefficient functions $\eta_j$ of $\eta$ are holomorphic on $\D$ and we have 
$\eta_j \in \mathfrak{g}_{m(j)}$ and $m(j) \equiv j \mod k$.

Conversely, consider any holomorphic 1-form $\xi$ satisfying the three conditions just listed for $\eta$.
Then solve the ODE $d\mathcal{C} = \mathcal{C} \xi$ on $\D$ with $\mathcal{C} \in \Lambda G$.
Next write $\mathcal{C} = \mathcal{F} _\lambda \cdot \mathcal{V}_+$
with $\mathcal{F}_\lambda  \in \Lambda G^\R_{\sigma}$ and $\mathcal{V}_+ \in 
 \Lambda ^+G _{\sigma} $.
Then $\mathcal{F}_\lambda$ is the extended frame of the associated family of some primitive harmonic map $h : \D \rightarrow G^\R / {G^\R_0}.$
\end{Theorem}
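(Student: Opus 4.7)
The plan is to adapt the DPW argument to our primitive-harmonic setting with the order-$k$ automorphism $\sigma$ and real-form involution $\tau$; the two directions are largely independent.

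For the forward implication, take the extended frame $\mathcal{F}_\lambda$ and form the decomposition $\mathcal{F} = \mathcal{C}\cdot\mathcal{L}_+$. The existence of such a splitting with $\mathcal{C}$ holomorphic in $z \in \D$ and $\lambda \in \C^*$, and $\mathcal{L}_+ \in \Lambda^+G_\sigma$, is precisely what the paragraph preceding the theorem asserts (using that $\D \neq S^2$). A short computation gives
\[
\eta = \mathcal{C}^{-1}d\mathcal{C} = \mathcal{L}_+\bigl(\mathcal{F}^{-1}d\mathcal{F}\bigr)\mathcal{L}_+^{-1} + \mathcal{L}_+\, d(\mathcal{L}_+^{-1}).
\]
Since $\mathcal{F}^{-1}d\mathcal{F} = \lambda^{-1}\alpha_{-1}dz + \alpha_0 + \lambda\alpha_1 d\bar z$ and $\mathcal{L}_+$ contains only non-negative powers of $\lambda$, no $\lambda$-powers below $\lambda^{-1}$ can appear in $\eta$. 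Holomorphicity of $\mathcal{C}$ in $z$ forces $\eta$ to be a $(1,0)$-form with holomorphic coefficients. Finally, the twist condition $\mathcal{C} \in \Lambda G_\sigma$ implies $\eta_j \in \mathfrak{g}_{m(j)}$ with $m(j) \equiv j \bmod k$, giving the stated form \eqref{eta}.

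For the converse, start with $\xi$ satisfying the three listed conditions, solve the linear ODE $d\mathcal{C} = \mathcal{C}\xi$ on the simply connected $\D$ with $\mathcal{C} \in \Lambda G_\sigma$, and split it via Iwasawa as $\mathcal{C} = \mathcal{F}_\lambda\cdot\mathcal{V}_+$ with $\mathcal{F}_\lambda \in \Lambda G^\R_\sigma$ and $\mathcal{V}_+ \in \Lambda^+G_\sigma$. Then
\[
\mathcal{F}_\lambda^{-1}d\mathcal{F}_\lambda = \mathcal{V}_+\xi\mathcal{V}_+^{-1} - (d\mathcal{V}_+)\mathcal{V}_+^{-1}.
\]
The right-hand side lies in $\Lambda\mathfrak{g}^\R_\sigma$ by construction; its lowest $\lambda$-power is $\lambda^{-1}$ because $\xi$ does and $\mathcal{V}_+$ introduces only non-negative powers; and by the $\tau$-reality of $\mathcal{F}_\lambda$ the Laurent expansion in $\lambda$ is $\tau$-symmetric, so the highest power can only be $\lambda$. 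Hence $\mathcal{F}_\lambda^{-1}d\mathcal{F}_\lambda$ has the primitive form $\lambda^{-1}\alpha_{-1}dz + \alpha_0 + \lambda\alpha_1 d\bar z$ with $\alpha_j \in \mathfrak{g}_j$, and Theorem \ref{theoremharmonicity} concludes that $h_\lambda = \pi\circ\mathcal{F}_\lambda$ is primitive harmonic with extended frame $\mathcal{F}_\lambda$.

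The main obstacle is ensuring that both splittings exist globally on $\D$ (which fails on $S^2$, hence the standing exclusion) and that $\mathcal{C}$ can simultaneously be made holomorphic in $z$; once this is secured, the structural assertions on $\eta$ and on $\mathcal{F}_\lambda^{-1}d\mathcal{F}_\lambda$ reduce to bookkeeping with $\lambda$-powers and unwinding the $\sigma$-eigenspace decomposition at the level of Maurer-Cartan forms.
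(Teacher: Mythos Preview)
The paper does not prove this theorem: it is stated as ``a straightforward generalization of a result of \cite{DPW}'' and is quoted without proof. Your sketch is the standard DPW argument --- Birkhoff splitting in the forward direction, Iwasawa splitting in the converse --- and is correct. One small point worth making explicit in the converse: you assert that $\mathcal{F}_\lambda^{-1}d\mathcal{F}_\lambda$ has the form $\lambda^{-1}\alpha_{-1}\,dz + \alpha_0 + \lambda\,\alpha_1\,d\bar z$, but the split into $(1,0)$ and $(0,1)$ parts deserves a word: the $\lambda^{-1}$ coefficient is $\mathcal{V}_+(0)\,\xi_{-1}\,\mathcal{V}_+(0)^{-1}$, which is a $(1,0)$-form since $\xi$ is; then $\tau$-reality forces the $\lambda^{+1}$ coefficient to be its $\tau$-image, hence a $(0,1)$-form. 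With that clarification your argument is complete and matches what \cite{DPW} does.
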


\subsection{Evaluation of the meaning of primitive harmonic maps relative 
to $\sigma$, $\sigma	^2$, and $\sigma^3$}
We start by  evaluating what it means that a Maurer-Cartan form
of some frame of some liftable immersion into $\C P^2$ without complex points 
is primitive relative to  $\sigma$, $\sigma^2$, or $\sigma^3$ respectively.

We will use the notation introduced just above.
\begin{Theorem} \label{equivprimitive}
Let $G = \SL$ and $\mathfrak{g} = \sl$ its Lie algebra.
Let $\tau$ denote the real form involution of $G$ singling out 
 $G^{\R}=\SU$ in $G$
and let $\sigma = \sigma^G$ be the automorphism of order $6$ of $G$ given by 
$\sigma (g) =   P (g^{T})^{-1} P$  in  \eqref{defsigma}.
Assume moreover, that $\f$ is the lift of  a liftable immersion $f$ into $\C P^2$ without complex points and with frame  $\mathcal F$ in $G^{\R}$.
Then the following statements hold:
\begin{enumerate}
 \item $\mathcal F$ is primitive harmonic relative to $\sigma$ if and only if $f$ is minimal 
Lagrangian in $\C P^2$.

\item $\mathcal F$ is primitive harmonic relative to $\sigma^2$ if and only if $f$ is minimal in $\C P^2$
without complex points.

\item $\mathcal F$ is primitive harmonic relative to $\sigma^3$ if and only if either $f$ is minimal 
Lagrangian or  $f$ is flat homogeneous in $\C P^2$.
\end{enumerate}
\end{Theorem}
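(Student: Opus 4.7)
The proof proceeds by computing the Maurer-Cartan form $\mathcal{F}^{-1} d\mathcal{F} = \mathcal{U}\,dz + \mathcal{V}\,d\bar{z}$ of the $\SU$-frame, given explicitly in \eqref{MCU}--\eqref{MCV}, and decomposing $\mathcal U$ into eigenspaces of $\sigma^k$ for $k=1,2,3$ using the descriptions of $\mathfrak{g}_0,\dots,\mathfrak{g}_5$ from Section \ref{sc:alg}. Because $\mathcal F \in \SU$ and $\tau(\mathfrak g_j)=\mathfrak g_{-j}$, the decomposition of $\mathcal V = \tau(\mathcal U)$ is determined by that of $\mathcal U$; the primitive harmonic condition therefore reduces to constraints on the entries of $\mathcal U$ (plus, in case $k=3$, one further integrability condition). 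The converse direction in each part is then verified by simply inserting the characterizing data back into \eqref{MCU}--\eqref{MCV}.

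For (1), primitivity relative to the order-six $\sigma$ requires $\mathcal U \in \mathfrak g_{-1}+\mathfrak g_0 = \mathfrak g_5 + \mathfrak g_0$. Reading off \eqref{MCU}: the $(1,2)$-entry $-\sqrt{ab}^{-1}\bar\phi$ is the only entry in a $\mathfrak g_1$-position and must vanish, giving $\phi=0$; the $(1,3)$- and $(3,2)$-entries must be equal (the structural $\mathfrak g_5$-condition), so $\sqrt a = \sqrt b$, forcing $a=b=1$; and the $(3,3)$-entry $\rho$ (a $\mathfrak g_3$-coefficient) must vanish. Thus $f$ is Lagrangian, the mean curvature form $\Xi = i(\Phi - \bar\Phi)$ vanishes, and $\rho = 0$. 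Conversely, if $f$ is minimal Lagrangian then $a=b=1$ and $\phi = 0$; by the Corollary to Theorem \ref{det=1} one may choose a horizontal lift with $\rho = 0$, and \eqref{MCU} then lies in $\mathfrak g_{-1}+\mathfrak g_0$. Since for the order $6>2$ case primitivity implies primitive harmonicity by the lemma preceding Theorem \ref{theoremharmonicity}, (1) follows. Part (2) is strictly analogous: primitivity relative to $\sigma^2$ demands the $\mathfrak g_1+\mathfrak g_4$-component of $\mathcal U$ to vanish; of the positions $(1,2),(2,3),(3,1)$ only $(1,2)$ is non-trivial in \eqref{MCU}, and its vanishing is again $\phi=0$, i.e., $f$ is minimal, with no constraint imposed on $a,b$.

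For (3), $\sigma^3$ is an involution and the grading part of the primitive condition is automatic. Decomposing $\mathcal U = U_0 + U_1$ and $\mathcal V = V_0 + V_1$ along the $(\pm 1)$-eigenspaces of $\sigma^3$, primitive harmonicity becomes the integrability of $\lambda^{-1}U_1\,dz + (U_0\,dz + V_0\,d\bar z) + \lambda V_1\,d\bar z$ for all $\lambda\in\C^*$. Using $V_j = \tau(U_j)$ and the underlying Maurer-Cartan equation, this reduces to the single matrix identity
\[
\partial_{\bar z} U_1 - [U_1, V_0] = 0,
\]
the conjugate equation for $V_1$ being implied by reality. Splitting this along the finer $\sigma$-grading into its $\mathfrak g_1$, $\mathfrak g_3$ and $\mathfrak g_5$ components and inserting the explicit expressions for $U_1, V_0$ from \eqref{MCU}--\eqref{MCV} produces three scalar PDEs in $a,b,\omega,\phi,\psi,\rho$. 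Combining these with \eqref{comp1}--\eqref{comp4} should yield exactly two branches: either $a=b=1$ and $\phi=0$ (minimal Lagrangian, which by (1) is $\sigma$-primitive harmonic and hence automatically $\sigma^3$-primitive harmonic since the $\sigma$-grading refines the $\sigma^3$-grading), or a rigid branch in which $a,b,\omega$ are constant and $\phi\equiv\psi\equiv 0$, corresponding to the flat homogeneous surfaces in $\C P^2$ arising as orbits of the maximal torus of $\SU$.

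\textbf{Main obstacle.} The heart of the argument lies in case (3): organizing the extra integrability condition into its $\mathfrak g_1,\mathfrak g_3,\mathfrak g_5$-scalar components and combining them with \eqref{comp1}--\eqref{comp4} to rule out any solutions beyond the two families above. The $\mathfrak g_3$-component, which couples $\partial_{\bar z}\rho$ to the off-diagonal data and interacts with the reality constraint \eqref{comp1}, should drive the dichotomy by forcing either the Lagrangian condition $a=b$ or the constancy of $a,b,\omega$; the latter then identifies the surface with a flat homogeneous one via Theorem \ref{FTL}.
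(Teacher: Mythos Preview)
Your treatment of parts (1) and (2) matches the paper's argument and is correct.

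Part (3) has a genuine gap, in both the mechanism and the expected outcome. In the paper's computation the Lagrangian condition $a=b=1$ is not one branch of a dichotomy but is forced \emph{unconditionally} before any case distinction arises. The $(1,1)$-entry of the equation $\partial_{\bar z}U_1=[U_1,V_0]$ (your $\mathfrak g_3$-piece) does give $\partial_{\bar z}w=-\tfrac14(a-b)e^\omega$, but it must be combined with the $(2,3)$-entry, which yields $w(\sqrt a-\sqrt b)=0$; together these force $a=b$ identically. Only then, \emph{within the Lagrangian case}, does a dichotomy appear, and it is driven by the remaining off-diagonal entries: one finds $\phi$ and $\psi$ holomorphic, and the $(1,2)$-entry gives $\partial_{\bar z}\phi=\omega_{\bar z}\phi$, so either $\phi\equiv 0$ (minimal Lagrangian) or $\omega,\phi,\psi$ are all constant with $\psi$ non-vanishing.

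Consequently your description of the flat homogeneous branch as ``$a,b,\omega$ constant and $\phi\equiv\psi\equiv 0$'' is incorrect. The condition $\phi\equiv 0$ already characterizes minimality, so that case collapses into the first branch. The flat homogeneous surfaces singled out by (3) are Lagrangian ($a=b=1$) with constant, generically nonzero $\phi$ and nonzero constant $\psi$; they are precisely the non-minimal homogeneous Lagrangian examples. Your expectation that the analysis bifurcates at the level of $a$ versus $b$, with the $\mathfrak g_3$-component alone driving that split, is therefore the wrong picture.
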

\begin{Remark}
 From Theorem \ref{equivprimitive}, in each case, we have 
 a primitive harmonic map in $G^{\R}/G_0^{\R}$ relative to 
 $\sigma$, $\sigma^2$ and $\sigma^3$, respectively.
 We will discuss these maps in section \ref{sc:Ruh-Vilms} in detail.
\end{Remark}
\begin{proof} Since our statement basically only uses local properties, we can assume without loss of generality that $f$ and $\f$ are defined on a contractible domain $\D$.

$(1)$ We consider the Maurer-Cartan form  $\alpha$ of some frame of $\f$. Then primitive harmonicity relative to $\sigma$ means that there is no component of $\alpha$ in the spaces 
 $\mathfrak{g}_j$, $j = 2,3,4$.
It is straightforward to see that this is equivalent to $\phi = 0$, $a=b (=1)$ and that the diagonal is in  $\mathfrak{g}_0$. In particular $\rho = 0$ and the matrices \eqref{MCU} and \eqref{MCV} have exactly the form of the Maurer-Cartan form of a minimal Lagrangian immersion (including the case $\Psi = 0$).

$(2)$  We consider again  the Maurer-Cartan form  of $\f$. Then primitive harmonicity relative to $\sigma^2$ means that there is no component of $\mathcal{U} $ in the 
spaces $\mathfrak{g}_j$, $j = 1, 4$, and there is no component of $\mathcal{V}$ in the 
spaces $\mathfrak{g}_j$, $j = 2, 5$. These two conditions are equivalent to $\phi = 0$.

 Thus the  primitivity relative to $\sigma^2$ is equivalent to $f$ being minimal without complex points by  Theorem \ref{FTL}.

$(3)$  In this case we need to consider $\mathcal U = {\mathcal U}_0  + \lambda \mathcal U_1 $ for all $\lambda \in S^1$, where $\mathcal U_0 $ takes values in the fixed point space of $\sigma^3$ and $\mathcal U_1 $ takes values in  the eigenspace for the eigenvalue $-1$.

 From section 2 we know that the eigenspaces for $\sigma^3$ are $\mathfrak{g}_4 + \mathfrak{g}_2 + \mathfrak{g}_0$ for the eigenvalue  $1$ and  $\mathfrak{g}_1 + \mathfrak{g}_3 + \mathfrak{g}_5$ for the eigenvalue  $\epsilon^3 =  -1 $.
We thus consider a primitive 1-form 
$\hat{\alpha} =  \mathcal{U} dz + \mathcal{V} d \bar z$, where  $ \mathcal{U} $ is of the form
\begin{equation*} \label{MCUsigma3}
\mathcal U =  \mathcal U_0  + \lambda^{-1} \mathcal U_1,
\end{equation*}
where
\begin{align*}
\mathcal U_0&=
\begin{pmatrix} 
 u_{11} &  0 &  \frac{i}{2} (\sqrt{a}- \sqrt{b}) e^{\frac{\omega}{2}}\\
 0  & -u_{11} & 0\\
 0 & -  \frac{i}{2} (\sqrt{a}- \sqrt{b}) e^{\frac{\omega}{2}} & 0
 \end{pmatrix}, 
 \\
\mathcal U_1&=
  \begin{pmatrix} 
 w &  - \sqrt{ab}^{-1} \phi &  \frac{i}{2} (\sqrt{a} + \sqrt{b}) e^{\frac{\omega}{2}}\\
 \sqrt{ab}^{-1} e^{-\omega} \psi  & w & 0\\
 0 &   \frac{i}{2} (\sqrt{a} + \sqrt{b}) e^{\frac{\omega}{2}} & -2w
 \end{pmatrix},
\end{align*}
 and an analogous expression holds for 
 $\mathcal{V}  = -(\overline{\mathcal{U}})^T.$

We already know from the beginning of the proof that for $\lambda = 1$ the map $\f$ comes from some immersion $f$  into $\C P^2$ without complex points.
It thus is of importance to observe that our 1-form $\alpha$ has the form stated in 
(\ref{MCU}) and  (\ref{MCV}).
As a consequence, we know the form of the diagonal entries of $\mathcal{U} $ and $\mathcal{V}$.

Next we evaluate the integrability condition  $d \alpha + \frac{1}{2} [\alpha \wedge \alpha] = 0$. Expanding in $\lambda$ it is easy to see that only the following two equations need to be evaluated:
\begin{enumerate}
\item $\partial_{\bar{z}} \mathcal{U}_1 = [ \mathcal{U}_1,  \mathcal{V}_0]$,
\item  $ \partial_{\bar{z}} \mathcal{U}_0 -  \partial_{z} \mathcal{V}_0 =
[ \mathcal{U}_1 ,\mathcal{V}_1 ] +     [  \mathcal{U}_0 , \mathcal{V}_0 ]$.
\end{enumerate}
We look first at the first of these matrix equations and evaluate the $(11)$-entry and 
the $(23)$-entry. After a simple computation we obtain $\partial_{\bar{z}} w = -\frac{1}{4} (a-b) e^\omega$ and $ -3 w \frac{i}{2} (\sqrt{a} - \sqrt{b}) e^{\frac{\omega}{2}} = 0$. Altogether we conclude that $a=b$ holds on $\D$. In particular we then also have $a=b=1$ and that $w$ is holomorphic.

We can assume, since we have normalized our frames to have determinant $1$, by using  formula $(3)$ after \eqref{MCV}, that 
$\tr (\mathcal{U} )= (a^{-1} + b^{-1}) \phi + 3 \rho + 
\frac{1}{2} (\frac{a_z}{a} -\frac{b_z}{b} )= 0, $ holds. Thus  we have  
$ \rho = - \frac{2}{3} \phi= -2 w$.  In particular, $\phi$ is  holomorphic.

Evaluating the matrix equation $(1)$ above further,  we obtain from the matrix entry $(13)$ the equation 
$\frac{\omega_z}{2} = u_{11}$. Now the equations for the matrix entries $(12)$  and $(21)$ 
imply  $\partial_{\bar{z}}\phi = 2\bar{u}_{11} \phi$ and that $\psi$ is holomorphic  respectively.
As a result, $\phi = 0$ and the $1$-form $\alpha$ is exactly the Maurer-Cartan form of the 
$\SU$-frame of a minimal Lagrangian immersion, or $\omega$, $\phi$ and $\psi$ are all constant with $\psi$ is non-vanishing, which gives a Lagrangian homogeneous surface. 
One can check easily that with these conditions  the  1-form 
$\alpha$ actually is primitive relative to $\sigma^3$.
\end{proof}
In the sections above we had always assumed that we start from some liftable immersion 
into $\C P^2$ and consider the frame constructed at the beginning of this paper.

The next theorem is more general. As before we will use  $e_3 = (0,0,1)^T$. 
\begin{Theorem}\label{thm:primitive} 
Let $\hat{\f} : \D \rightarrow S^5$ be a smooth map and 
$\hat{\mathcal{F}}:\D \rightarrow \SU$ a frame such that $\hat {\mathcal{F}}.e_3 = \hat \f$.
Moreover, we assume that the Maurer-Cartan form $\hat{\alpha}$ of $\hat{\mathcal{F}}$ has the general form, 
more precisely we thus consider a  $1$-form 
$\hat{\alpha} = \hat{ \mathcal{U}} dz + \hat{\mathcal{V}} d \bar z$,  in $\lsu$, where
\begin{equation*} \label{MCUprim}
\hat{\mathcal U} 
=
\begin{pmatrix} 
 u_{11}  + w &  u_{12} &  u_{13}\\
 u_{21}  & -u_{11}  + w & 0\\
 0 & u_{32} &  - 2 w
 \end{pmatrix}, 
 \end{equation*}
 and 
 \begin{equation*} \label{MCVprim}
\hat{\mathcal V} =
 \begin{pmatrix} 
 -\bar{ u}_{11}  -\bar{w}  & -\bar{u}_{21} & 0 \\
 -\bar{u}_{12} &   \bar{ u}_{11}  -\bar{w} &- \bar{ u}_{32}\\
  -\bar{ u}_{13} & 0 &  2 \bar{w}\\
\end{pmatrix}.
\end{equation*}
Furthermore, we assume that $u_{13}$ and $u_{23}$  never vanish.
Then the following statements hold:
\begin{enumerate}
\item Each primitive harmonic $\hat{\mathcal{F}}$  
relative to $\sigma$ can be derived from a minimal Lagrangian immersion in $\C P^2$.

\item Each primitive harmonic $\hat{\mathcal{F}}$ 
relative to $\sigma^2$ can be derived from a minimal immersion in $\C P^2$ without complex points. 

\item Each primitive harmonic $\hat{\mathcal{F}}$  relative to $\sigma^3$  can be derived from a minimal Lagrangian immersion or a flat homogeneous immersion in $\C P^2$.
\end{enumerate}
\end{Theorem}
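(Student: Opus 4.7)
The plan is to reduce Theorem \ref{thm:primitive} to Theorem \ref{equivprimitive}. Specifically, I will show that, after at most an $\SU$-gauge that preserves primitivity term by term, the abstract data $(\hat{\f}, \hat{\mathcal{F}})$ arise as the canonical $\SU$-frame of some liftable conformal immersion $f : \D \to \C P^2$ without complex points. Each primitivity condition on $\hat{\mathcal{F}}$ then translates directly into a geometric condition on $f$ via Theorem \ref{equivprimitive}.

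I would first set $f = \pi \circ \hat{\f}$ and check that it is a conformal immersion without complex points. From $\hat{\f} = \hat{\mathcal{F}} e_3$ and the prescribed shape of $\hat{\mathcal{U}}, \hat{\mathcal{V}}$ one obtains
\begin{equation*}
\hat{\f}_z = u_{13}\, \hat{\mathcal{F}} e_1 - 2w\, \hat{\f}, \qquad
\hat{\f}_{\bar z} = -\bar u_{32}\, \hat{\mathcal{F}} e_2 + 2\bar w\, \hat{\f}.
\end{equation*}
Since the columns of $\hat{\mathcal{F}} \in \SU$ are Hermitian orthonormal, the projections \eqref{xiandeta} simplify to $\xi = u_{13}\, \hat{\mathcal{F}} e_1$ and $\eta = -\bar u_{32}\, \hat{\mathcal{F}} e_2$, which are automatically Hermitian orthogonal. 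The non-vanishing hypothesis (read as $u_{13}$ and $u_{32}$ never vanishing, the two positions controlling $\xi$ and $\eta$) then guarantees $\xi, \eta \neq 0$ everywhere, so $f$ is a conformal, liftable immersion without complex points with $\hat{\f}$ as a global lift.

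Next, I would compare $\hat{\mathcal{F}}$ with the canonical $\SU$-frame $\mathcal{F}(\hat{\f})$ built from $\hat{\f}$ as in equations \eqref{MCU}--\eqref{MCV}. Since both frames lie in $\SU$ with third column $\hat{\f}$, they differ by a gauge $\hat{\mathcal{F}} = \mathcal{F}(\hat{\f})\, k$ with $k : \D \to \operatorname{Stab}_{\SU}(e_3)$, the latter being $\SUt$ embedded as $\di(A, 1)$. Matching the third columns of the Maurer--Cartan forms, and using that the $(2,3)$-entry of the canonical $\mathcal{U}$ vanishes while its $(1,3)$-entry is nowhere zero, forces $A$ to be upper triangular and hence diagonal, so $k = \di(e^{i\theta}, e^{-i\theta}, 1)$ for some smooth $\theta : \D \to \R$. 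A direct computation using \eqref{defsigma} gives $\sigma(k) = k$, so $k$ takes values in $\Fi(\SU, \sigma)$ and $k^{-1}dk$ takes values in $\mathfrak{g}_0$. The gauge $\alpha \mapsto k^{-1}\alpha k + k^{-1}dk$ therefore preserves the $\sigma$-eigenspace decomposition, and consequently also the $\sigma^2$- and $\sigma^3$-decompositions, term by term; so $\hat{\mathcal{F}}$ is primitive relative to $\sigma^j$ exactly when $\mathcal{F}(\hat{\f})$ is, for each $j \in \{1, 2, 3\}$.

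Finally, applying Theorem \ref{equivprimitive} to $\mathcal{F}(\hat{\f})$ converts each primitivity condition into the geometric statement claimed for $f$, establishing $(1)$, $(2)$ and $(3)$. The main technical obstacle is the gauge analysis in the previous paragraph: one must verify that the prescribed zero pattern, especially the vanishing $(2,3)$-entry of $\hat{\mathcal{U}}$, is strong enough to reduce $k$ to the diagonal form above, after which the key observation $\sigma(k) = k$ renders primitivity gauge-invariant and closes the argument.
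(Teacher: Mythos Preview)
Your proposal is correct and follows essentially the same route as the paper. The paper performs the diagonal gauge up front (normalising $u_{13}=iA$, $u_{32}=iB$ with $A,B>0$) and then identifies the gauged frame term by term with the canonical frame $\mathcal{F}(\hat{\f})$ before invoking Theorem~\ref{equivprimitive}; you instead show that the a priori $\SUt$-gauge between $\hat{\mathcal{F}}$ and $\mathcal{F}(\hat{\f})$ is forced to lie in $\Uone=\Fi(\SU,\sigma)$ and hence preserves primitivity, which is the same key observation phrased slightly more abstractly.
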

\begin{proof} For the proof we can replace without loss of generality a given $\f$ by a gauged one.
Hence, by using the assumptions one can gauge 
 $\hat{\alpha} $ by a diagonal matrix in $\SU$ such that for the matrix entries ${u}_{jk}$ 
 of  ${\hat{\mathcal{U}}}$ we obtain: ${u}_{13} = iA$ and  ${u}_{32} = iB$  with $A$ and $B$ globally defined positive functions. We also define ${\psi}$ by  putting 
   ${u}_{21} = (AB)^{-1}{\psi}$.
   Then the matrix $\hat{\mathcal{U}}$ attains the form
   \begin{equation*} \label{hatU}
 \hat{\mathcal{U}} =
\begin{pmatrix}
  u_{11}  + w       &   u_{12}  &         i A \\
 (AB)^{-1} \psi &   -u_{11}  + w &     0\\
0         &         i B   &    -2w \\
\end{pmatrix}
\end{equation*}
and $\hat{\mathcal{V}} = -(\overline{\hat{\mathcal{U}}})^T$ has the form 
  \begin{equation*} \label{hatV}
 \hat{\mathcal{V}} =
\begin{pmatrix}
- \bar{ \hat{u}}_{11}  -\bar{w}         &      - (AB)^{-1} \bar{\psi}   &     0    \\
  - \bar{u}_{12}&  \bar{ {u}}_{11}  -\bar{w}   &     iB\\
iA        &         0   &    2 \bar{w}  \\
\end{pmatrix}.
\end{equation*}
It is not difficult to verify that there exist uniquely determined $a,b >0$ such that
\[
A = \sqrt{a}e^{\omega/2}, \quad  B = \sqrt{b}e^{\omega/2}, \quad  
a + b = 2.
\]
Using these definitions we finally  define $\phi$ by the equation:  $u_{12} = - \sqrt{ab}^{-1} \phi$.
 By assumption we have the solution $\hat{\mathcal{F}}$ to  the system
  $d \hat{\mathcal{F}} = \hat{\mathcal{F}}\hat{\alpha}$.
  We write
 $ \hat{\mathcal{F}} = ( \hat{\f}_1, \hat{\f}_2, \hat{\f}).$
 An evaluation of the equation for $\hat{\mathcal{F}}$ yields
 \begin{equation*}
 \hat\f_z=  i A  \hat{ \f }_1  -2w \hat{\f}.
 \end{equation*}
 
 And similarly we obtain for  ${\hat \f}_{\bar{z}} $ in view of 
 $\hat{\mathcal{V}}  = -(\overline{\hat{\mathcal{U}}})^T$  the equation
    \begin{equation*}
{\hat \f}_{\bar{z}}=  i B  {\hat \f }_2   + 2 \bar{w} {\hat \f}.
 \end{equation*}

A simple calculation shows now that  ${\hat \f }_z$ and $  { \hat \f }_{\bar{z}}$ are linearly independent everywhere. Thus $\hat\f$ is an immersion (into $S^5$) and it follows that the projection $\hat f$ of $\hat\f$ to $\C P^2$ is also an immersion
 (and then obviously has the lift $\hat \f$). We need to show that $\hat f$ does not have any complex points.
 For this we consider as in (\ref{xiandeta}):
 \begin{equation*} \label{xiandeta2}
\xi :=\hat \f_z - (\hat \f_z\cdot \bar{\hat \f}){\hat \f} \hspace{3mm} \mbox{ and} \hspace{3mm} \eta := {\hat \f}_{\bar z} -
({\hat \f}_{\bar z}\cdot \bar{\hat \f}) \hat \f. 
\end{equation*}

A straightforward computation yields $\xi \cdot \bar{\xi} = A^2 >0$ and a similar computation yields $\eta \cdot \bar{\eta} = B^2 >0$.
Next we observe that one can write $A$ and $B$ uniquely in the form $A = \sqrt{a} 
 e^{\frac{\omega}{2}}$, $B = \sqrt{b} e^{\frac{\omega}{2}}$, with $a,b >0$ and $a+b = 2$.
Then we can rewrite $ a = e^{-\omega} \xi \cdot \bar{\xi}$ and similarly 
$ b = e^{-\omega} \eta \cdot \bar{\eta}$ and it follows that $f$ does not have any complex points.

Note that this implies that $\hat f$ induces the metric  $g = 2 e^\omega dz d\bar{z}$ (by comparison to section \ref{susc:1.2}).
Moreover we infer $\hat{\rho}= {\hat \f}_z \cdot \bar{\hat \f}$ and also 
$\hat{\f}_1 = -ie^{-\frac{\omega}{2}} \sqrt{a}^{-1} \xi$. In addition, this implies $\hat{\rho} = -2w$.
Similarly, we have $\hat{\f}_2 = -ie^{-\frac{\omega}{2}} \sqrt{b}^{-1} \eta$.

As a consequence, the frame $\hat{\mathcal{F}}$ coincides with the frame $\mathcal{F}$. 
Thus $\hat\f$ as defined above from $\hat{\mathcal{F}}$  is the lift of an immersion 
into $\C P^2$ without complex points.

Hence the claims $(1)$, $(2)$ and $(3)$ follow from the last Theorem.
\end{proof}

\section{Ruh-Vilms type theorems}\label{sc:Ruh-Vilms}

In the famous Theorem of Ruh-Vilms \cite{RV}, for immersions into $\R^3$,  one proves that the Gauss map into $S^2$ of an immersion into $\R^3$ is harmonic if and only if the original immersion has constant mean curvature.
We will generalize this situation to minimal surfaces without complex points in $\C P^2$ and to minimal Lagrangian surfaces in $\C P^2$.

In our discussion of minimal surfaces in $\C P^2$ without complex points and of minimal Lagrangian surfaces in $\C P^2$ we restricted to liftable surfaces and thus moved the discussion primarily to surfaces defined on some contractible domain $\D \subset \C.$
Therefore, in the following sections we will exclusively consider immersions defined on $\D$.

\subsection{Various bundles}
We first introduce
three $6$-symmetric spaces of dimension $7$
which are bundles over $S^5$. Our approach applies and extends ideas of \cite{McIn} to our case.
 
 We consider the spaces $FL_1$, $FL_2$, and $FL_3$.
 We first choose a natural basis 
 \[
  e_1 = (1, 0, 0)^T,\quad  e_2 = (0, 1, 0)^T,\quad  e_3= (0, 0, 1)^T
 \]
 of $\C^3$. 

 $(1) \; FL_1:$ 
We now consider $\C^3$ as the real $6$-dimensional symplectic vector space 
 given by the symplectic form $\Omega = \Im \langle \;,\; \rangle$. 
 Then the family of (real) oriented 
 Lagrangian 
 subspaces of  $\C^3$  form a submanifold of the real Grassmannian $3$-spaces of $\C^3$, 
 that is, they form the  \emph{ Grassmannian 
 manifold $\LGr(3, \C^3)$ of oriented Lagrangian subspaces.}  It is easy to see that
 $\LGr(3, \C^3)$ can be represented as the homogeneous space 
 $\U / \SO$. 
 In this paper we use the special orthogonal matrix group $\SO$ as the 
 connected subgroup of $\SU$
 corresponding to the sub-Lie-algebra  of $  \mathfrak{su}_3 $  given by
\[
 \mathfrak{so}_3 = \left\{
 \begin{pmatrix}
 i a & 0 & y \\
 0 & -i a & \bar y \\
 - \bar y & -y  & 0
 \end{pmatrix}\;\Big|\;
 a \in \R, \, y \in \C
\right\}\subset  \mathfrak{su}_3,
\]
 which is isomorphic to the standard $\mathfrak{so}_3$ by 
 the automorphism $X \mapsto \ad (H)(X)$, where
\[
 H =  \begin{pmatrix}
 \frac{1-i}{2} &  \frac{1+i}{2} & 0 \\
 \frac{1+i}{2} &  \frac{1-i}{2} & 0 \\
0 & 0  & 1
 \end{pmatrix}.
  \]
  
The orbit of $\SU$ in $\LGr(3, \C^3)$ through the point $e \SO$ will be  called 
 \emph{special Lagrangian Grassmannian} and it will be denoted by 
 $\SLGr(3, \C^3)$.
 The elements in this orbit will be called \emph{oriented  special Lagrangian 
subspaces} of $\C^3$.

We summarize this by 
\begin{Proposition}\label{Prp:Grass}
  $\SU$ acts transitively on  $\SLGr(3, \C^3)$, 
  and we obtain
 \[
\SLGr(3, \C^3) = \SU / \SO.
 \]
The base point $e\SO$ corresponds to the real Lagrangian subspace of $\C^3$ given by $H^{-1} \R^3$.
\end{Proposition}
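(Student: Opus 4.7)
The plan is to apply the orbit--stabilizer theorem, using the identification $\LGr(3, \C^3) = \U / \SO$ recalled just above the statement. The proof has two parts: identifying $\SLGr(3, \C^3)$ with the coset space $\SU/\SO$, and identifying the base point $e\SO$ with $H^{-1}\R^3$.

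For the homogeneous-space part, by construction $\SLGr(3,\C^3)$ is the $\SU$-orbit of the point $e\SO \in \U/\SO$, so the $\SU$-action on it is transitive by definition. Since the $\SO$ written down in the paper is already contained in $\SU$, the isotropy of $e\SO$ in $\SU$ is $\SU \cap \SO = \SO$, and the orbit--stabilizer theorem yields $\SLGr(3,\C^3) = \SU / \SO$.

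To identify the base point, I would first exponentiate the Lie-algebra statement in the excerpt: the assertion that $\Ad(H)$ maps the given $\mathfrak{so}_3 \subset \mathfrak{su}_3$ isomorphically onto the standard $\mathfrak{so}_3^{\mathrm{std}}$ integrates to the group-level conjugacy $\SO = H^{-1}\,\SO^{\mathrm{std}}\,H$. For $g \in \U$ the relation $g H^{-1}\R^3 = H^{-1}\R^3$ is then equivalent to $(HgH^{-1})\R^3 = \R^3$, i.e.\ $HgH^{-1} \in \mathrm O_3(\R)$; on the orientation-preserving component this reads $HgH^{-1} \in \SO^{\mathrm{std}}$, i.e.\ $g \in \SO$. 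Thus the $\U$-stabilizer of the oriented subspace $H^{-1}\R^3$ is precisely the paper's $\SO$, so under the identification $\LGr(3,\C^3) = \U/\SO$ the coset $e\SO$ corresponds to $H^{-1}\R^3$. It only remains to note that $H^{-1}\R^3$ is an oriented Lagrangian subspace: for $v,w \in \R^3$ one has $\Omega(v,w) = \Im(v^T w) = 0$, so $\R^3$ itself is Lagrangian, and $H^{-1}\in\U$ preserves both $\Omega$ and the orientation.

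The main (minor) obstacle is just bookkeeping with conventions. The paper's $\SO$ is conjugate to the standard $\SO^{\mathrm{std}}$ by the element $H$, which lies in $\U$ but \emph{not} in $\SU$ (a short computation shows $\det H = -i$); this is precisely why the $\SU$-orbit passes through the rotated Lagrangian $H^{-1}\R^3$ rather than through $\R^3$ itself. Once this conjugacy is in place, the proof reduces to a textbook application of orbit--stabilizer.
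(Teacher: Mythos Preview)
Your argument is correct and is precisely the argument the paper intends: the proposition is stated there without an explicit proof, as a summary of the preceding discussion, and your orbit--stabilizer computation together with the conjugacy $\SO = H^{-1}\,\SO^{\mathrm{std}}\,H$ (integrated from the paper's $\operatorname{Ad}(H)$ statement) fills in exactly the details that are left implicit. Your observation that $\det H = -i$, so $H\notin\SU$, nicely explains why the base point is $H^{-1}\R^3$ rather than $\R^3$.
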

Next we define 
\begin{equation*}
FL_1 = \{ (v,V)\;|\; v \in S^5, \; v \in V, \;
 V  \in  \SLGr(3, \C^3)\}.
\end{equation*}
 It is easy to verify that $\SU$ acts (diagonally) on $FL_1$.
 Note that the natural projection from $FL_1$ to $\CP$ is a 
 Riemannian submersion which is equivariant under the natural group actions.
 Since $S^5 = \SU/\SUt$, where  $\SUt$ means 
 $\SUt \times \{1\}$, 
 the stabilizer at 
 \[
  (e_3, \operatorname{span}_{\R}\{ \tilde e_1, \tilde e_2,  e_3 \}) \in FL_1
 \quad \mbox{with}\quad
\tilde e_1 = \left(\frac{1+i}{2}, \frac{1-i}{2},0\right)^T, \quad 
\tilde e_2 = \left(\frac{1-i}{2},\frac{1+i}{2},0\right)^T
 \]
  is clearly given by
 $\SUt \cap \SO$, that is
 \[
  \Uone =\{(a, a^{-1}, 1)\;|\; a \in S^1 \}.
 \] 
 Therefore 
\[
 FL_1 = \SU/\Uone.
\]

 $(2)\; FL_2:$
 For the definition of $FL_2$, 
 we consider certain \emph{special regular complex flags} in $\C^3$.
 Here by a regular complex flag  $\mathcal{Q}$ we mean a sequence of four 
 complex subspaces, 
 $Q_0 = \{0\} \subset Q_1 \subset Q_2 \subset  Q_3 = \C^3$ 
 of $\C^3$, 
 where $Q_j$ has  complex dimension $j$.
 We then define the notion of a \emph{special regular complex flag} 
 in $\C^3$ over $q \in S^5$  
 by requiring that we have a  regular complex flag in $\C^3$,  
 where the space $Q_1$  satisfies $Q_1 = \C q$.
 Thus we define
 \begin{equation*}
FL_2 = 
 \left\{ (w,\mathcal{W})\mid  
\begin{array}{l}
\text{$w \in S^5$, $\mathcal W$ is 
 a special regular complex}  \\
\text{ flag over $w$ in $\C^3$ satisfying $W_1 = \C w$}
\end{array}
 \right\}.
\end{equation*}
  The definition of a special flag means that  for a given vector $q \neq 0$ in $\C^3$ one can find three pairwise orthogonal vectors $q_1, q_2, q_3 \in \C^3$ with
 $q_3 = \frac{q}{|q|}$ such that 
the vectors $q_1,q_2$ and $q_3$ represent the same orientation as $\tilde{e}_1, \tilde{e}_2, e_3$.
 By an argument similar to the previous case we conclude 
 that  $\SU$  acts transitively on the family of special flags.
 Moreover, the stabilizer of the action at the point 
 $(e_3, 0 \subset \C e_3 \subset  \C e_3 \oplus \C \tilde{e}_2
 \subset  \C e_3 \oplus \C \tilde{e}_2  \oplus \C \tilde{e}_1)$
 is again given by $\SO \cap \di$, where $\di$ denotes the set of all
 diagonal matrices in $\SU$. Thus it is again $\Uone$ and we have altogether shown
\begin{Proposition}\label{Prp:Flags}
 $\SU$ acts transitively on $FL_2$, and $FL_2$ can be 
 represented as
\[
  FL_2 = \SU / \Uone.
\]
 \end{Proposition}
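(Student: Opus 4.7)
The plan is to mimic the argument used for $\SLGr(3, \C^3)$ in Proposition \ref{Prp:Grass}. First I would verify that $\SU$ acts naturally on $FL_2$ by the diagonal action: for $g \in \SU$ and $(w, \mathcal{W}) \in FL_2$ with $\mathcal{W}\colon 0 \subset W_1 \subset W_2 \subset \C^3$, set
\[
g \cdot (w, \mathcal{W}) = \bigl(gw,\; 0 \subset gW_1 \subset gW_2 \subset \C^3\bigr).
\]
Since $g$ is unitary the images remain pairwise orthogonal and form a regular complex flag over $gw \in S^5$, and since $\det g = 1$ the orientation condition on compatible orthonormal frames is preserved, so the action restricts to $FL_2$.

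Second, I would establish transitivity with respect to the base point
\[
p_0 = \bigl(e_3,\; 0 \subset \C e_3 \subset \C e_3 \oplus \C \tilde e_2 \subset \C^3\bigr).
\]
Given $(w, \mathcal{W}) \in FL_2$, I pick an orthonormal frame $(q_1, q_2, q_3)$ of $\C^3$ compatible with $\mathcal{W}$, that is, $q_3 = w$ and $W_2 = \C q_3 \oplus \C q_2$. The vectors $q_1, q_2$ are determined only up to unit scalars $\lambda_1, \lambda_2 \in S^1$, and I would exploit this freedom, together with $\det_\C(\tilde e_1, \tilde e_2, e_3) = \det H = i$, to arrange $\det_\C(q_1, q_2, q_3) = i$; this is my reading of the orientation requirement in the definition of a special regular complex flag. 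Then $g := (q_1, q_2, q_3)\, H^{-1}$ is unitary with determinant $1$, hence lies in $\SU$, and by construction satisfies $g \cdot p_0 = (w, \mathcal{W})$.

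Third, I would compute the stabilizer of $p_0$. If $g \in \SU$ fixes $p_0$, then $g e_3 = e_3$, so $g$ preserves $e_3^\perp$. The preservation of $\C e_3 \oplus \C \tilde e_2$, together with unitarity on $e_3^\perp$, forces $g$ to preserve both $\C \tilde e_2$ and its orthogonal complement $\C \tilde e_1$ inside $e_3^\perp$. Hence $g = H\, \di(\lambda_1, \lambda_2, 1)\, H^{-1}$ for some $\lambda_1, \lambda_2 \in S^1$, and $\det g = 1$ forces $\lambda_1 \lambda_2 = 1$. This exhibits the stabilizer as the circle group $\{(a, a^{-1}, 1) : a \in S^1\} = \Uone$, yielding the homogeneous space presentation $FL_2 = \SU / \Uone$.

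The main obstacle is pinning down the precise meaning of the orientation clause, which the text states somewhat informally. My intended reading, forced by the parallel with the $\SLGr$ case and by the dimension/stabilizer bookkeeping, is that a compatible orthonormal frame must have complex determinant equal to $\det H = i$. This collapses the two-parameter phase ambiguity $(\lambda_1, \lambda_2)$ to the one-parameter subgroup $\lambda_1 \lambda_2 = 1$, which is precisely the expected size of the stabilizer; any other constant value for the determinant would give an $\SU$-equivariantly isomorphic model, so the choice of normalization is harmless.
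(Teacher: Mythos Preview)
Your approach is the same as the paper's: the text preceding the proposition argues transitivity ``by an argument similar to the previous case'' and asserts that the stabilizer at the stated base point equals $\SO \cap \di = \Uone$. You simply spell this out in more detail.

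There is, however, a computational slip in your final step. Conjugation by $H$ does \emph{not} leave the diagonal torus invariant: with $a = e^{i\theta}$ one finds
\[
H\,\di(a,a^{-1},1)\,H^{-1}
=\begin{pmatrix}\cos\theta & \sin\theta & 0\\ -\sin\theta & \cos\theta & 0\\ 0&0&1\end{pmatrix},
\]
which is the standard rotation subgroup $\SOt$, not $\Uone=\{\di(a,a^{-1},1)\}$. Thus with the base flag written through $\tilde e_2$ the stabilizer is $\SOt$, a conjugate of $\Uone$ in $\SU$ but not literally equal to it; your sentence ``this exhibits the stabilizer as $\Uone$'' is therefore false as stated. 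The quotient $\SU/\Uone$ is still obtained, since conjugate isotropy groups yield equivariantly diffeomorphic homogeneous spaces, but the cleanest repair is to take the base flag in the \emph{standard} basis, $0\subset\C e_3\subset\C e_3\oplus\C e_1\subset\C^3$. Then the stabilizer is visibly $\{\di(a,a^{-1},1)\}=\Uone$ with no conjugation needed, and this choice is the one actually used later in the paper for the Gauss map $\mathcal G_2$, where $\mathcal F=(\tilde\xi,\tilde\eta,\f)$ sends $e_1\mapsto\tilde\xi$. (The paper's own statement of the base point with $\tilde e_2$ carries the same tension.)
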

 Note that the natural projection from $FL_2$ to $\CP$ 
 is a Riemannian submersion which is equivariant under the natural group actions.

 $(3)\; FL_3:$ Finally, using the isometry group $\SU$ of $S^5$,
 we can directly define a homogeneous space $FL_3$ as
\begin{equation}\label{eq:FL3}
FL_3 = \left\{ U P \;U^T\;\Big|\; \mbox{$U \in \SU$ and $P
 =\begin{pmatrix}
   0 & \epsilon^2 & 0 \\
   \epsilon^4 &0 & 0 \\
   0 & 0 & 1
  \end{pmatrix}$}\right\},
\end{equation}
 where $\epsilon = e^{\pi i/3}$.
\begin{Theorem}\label{Thm:3.3}
We  retain the assumptions and the notion above. 
 Then the following statements hold:
\begin{enumerate}
\item The spaces $FL_j$  $(j = 1,2,3)$ are homogeneous under the natural action of 
 $\SU$.
\item The homogeneous space $FL_j$ $(j = 1,2,3)$ can be represented as
 \[
       FL_j = \SU/ \Uone,
 \quad \mbox{where}\quad 
\Uone = \{ \di (a,a^{-1}, 1)\;|\; 
 a\in S^1 \}.
 \]
 In particular they are all $7$-dimensional.
\end{enumerate}
\end{Theorem}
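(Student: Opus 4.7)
My plan is to treat the three spaces $FL_1$, $FL_2$, $FL_3$ one at a time, in each case presenting them as a single orbit of a natural $\SU$-action and identifying the stabilizer of a carefully chosen base point with $\Uone$. For $FL_1$ essentially all of the work has already been carried out in the discussion preceding the statement: $\SU$ acts diagonally, transitively on $S^5$ and, by Proposition~\ref{Prp:Grass}, transitively on $\SLGr(3,\C^3)$; after moving $v$ to $e_3$ the remaining $\SUt$ acts transitively on the oriented special Lagrangians through $e_3$, and the pointwise stabilizer $\SUt \cap \SO$ is elementarily seen to be $\Uone$. For $FL_2$ the statement is essentially Proposition~\ref{Prp:Flags}; I would only reverify the stabilizer of the base flag, which again reduces to the diagonal of $\SU$ with $a b \cdot 1 = 1$ and the second diagonal entry forced to be the inverse of the first.

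The substantive case is $FL_3$. Here $\SU$ acts by the twisted conjugation $U \cdot M = U M U^T$, and $FL_3$ is by definition the orbit through the matrix $P$ of \eqref{defsigma}, so transitivity is built in and I only need the stabilizer. My plan is to first observe that $P^2 = I$, which is a one-line check using $\epsilon^6 = 1$. With this in hand, rearranging $U P U^T = P$ by multiplying by $P^{-1}=P$ on the right and taking inverse-transpose yields $P(U^T)^{-1}P = U$, i.e., $\sigma^G(U) = U$. Thus the isotropy group of $P$ equals the fixed-point subgroup $\Fi(\SU,\sigma^G)$.

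It remains to identify $\Fi(\SU,\sigma^G)$ with $\Uone$. On the Lie-algebra side this is immediate from the eigenspace list in Section~\ref{sc:alg}: the $1$-eigenspace of $\sigma$ on $\sl$ is $\mathfrak{g}_0 = \{\di(a,-a,0)\mid a\in\C\}$, and intersecting with $\su$ leaves the one-parameter family $\{\di(it,-it,0)\mid t\in\R\}$, which exponentiates precisely to $\Uone$. This yields the identity component. The hard part will be upgrading this to the full fixed-point subgroup, and for that I plan to invoke the classical theorem (Borel--de Siebenthal) that the fixed-point set of a finite-order automorphism of a compact, simply connected, semisimple Lie group is connected. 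Since $\SU$ is simply connected and $\sigma^G$ has order $6$, this gives $\Fi(\SU,\sigma^G) = \Uone$, and hence $FL_3 = \SU/\Uone$. The dimension statement in (2) is then automatic from $\dim\SU - \dim\Uone = 8 - 1 = 7$, uniformly in all three cases.
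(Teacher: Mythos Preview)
Your proposal is correct and, for $FL_1$ and $FL_2$, essentially identical to the paper's: both simply invoke Propositions~\ref{Prp:Grass} and~\ref{Prp:Flags}, which already contain the transitivity and stabilizer computations.

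For $FL_3$ your route differs from the paper's. The paper merely asserts that the stabilizer at $P$ is ``easily computed'' to be $\Uone$ (the direct computation is implicit, and a separate direct computation of $\Fi(\SU,\sigma^G)=\Uone$ is deferred to the proof of Corollary~\ref{cor:3.4}). You instead observe that $P^2=I$, rewrite the stabilizer condition $UPU^T=P$ as $\sigma^G(U)=U$, and thereby identify $\mathrm{Stab}_P$ with $\Fi(\SU,\sigma^G)$ in one stroke; then you determine the latter via the Lie-algebra eigenspace $\mathfrak g_0$ together with Borel--de Siebenthal connectedness. This is a genuinely different argument: more conceptual, and it has the pleasant side effect of establishing $\mathrm{Stab}_P=\Fi(\SU,\sigma^G)$ directly rather than computing each side separately and noticing they coincide, which is exactly what is needed for Corollary~\ref{cor:3.4}. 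The paper's implicit direct computation, on the other hand, avoids importing the external structure theorem and keeps everything self-contained.
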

\begin{proof}
The statements clearly follow from Proposition \ref{Prp:Grass}, 
 Proposition \ref{Prp:Flags} and  the definition of $FL_3$ in \eqref{eq:FL3},
where  the stabilizer at $P$ is easily computed as $\Uone$.
\end{proof}


\begin{Corollary}\label{cor:3.4}
 The homogeneous spaces $FL_j \;(j=1, 2, 3)$ are $6$-symmetric spaces.
Furthermore, they are naturally equivariantly diffeomorphic.
\end{Corollary}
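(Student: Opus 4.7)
The plan is to exploit Theorem \ref{Thm:3.3}, which already exhibits all three $FL_j$ as the same coset space $\SU/\Uone$ with $\Uone = \{\di(a,a^{-1},1) \mid a\in S^1\}$. It therefore suffices to do two things: (i) verify that $\Uone$ satisfies the sandwich condition of Definition \ref{def:k-symmetric} relative to $\sigma$, which simultaneously endows all three $FL_j$ with a $6$-symmetric structure, and (ii) write down natural $\SU$-equivariant diffeomorphisms between the three realizations of $\SU/\Uone$.

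For (i), I take $G = \SL$, the real form $\SU = \Fi(G,\tau)$, and the order-$6$ automorphism $\sigma$ of \eqref{defsigma}. The inclusion $\Uone \subset \Fi(\SU,\sigma)$ would be checked by direct computation: for $g = \di(a,a^{-1},1)$ one has $\sigma(g) = P\,\di(a^{-1},a,1)\,P$, and the two $P$ factors shift the diagonal entries by the factor $\epsilon^6 = 1$, so $\sigma(g) = g$. For the reverse inclusion $\Fi(\SU,\sigma)^\circ \subset \Uone$, I would read off the fixed subalgebra from the eigenspace list in Section \ref{sc:alg}: the differential of $\sigma$ has $\mathfrak{g}_0 = \{\di(a,-a,0) \mid a\in\C\}$ as its $1$-eigenspace, and since $\tau(\mathfrak{g}_0) = \mathfrak{g}_0$ by \eqref{eq:tausigma}, the intersection with $\su$ is the one-dimensional real subalgebra $\{\di(ia,-ia,0) \mid a\in\R\} = \Lie(\Uone)$. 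Hence $\Fi(\SU,\sigma)^\circ$ is a connected one-dimensional subgroup with Lie algebra $\Lie(\Uone)$, forcing $\Fi(\SU,\sigma)^\circ = \Uone$, and the sandwich condition holds.

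For (ii), each $FL_j$ comes with a distinguished base point --- respectively the pair $(e_3, \operatorname{span}_\R\{\tilde e_1, \tilde e_2, e_3\})$ for $FL_1$, the regular flag $\{0\} \subset \C e_3 \subset \C e_3 \oplus \C \tilde e_2 \subset \C^3$ for $FL_2$, and the matrix $P$ itself for $FL_3$ --- and Propositions \ref{Prp:Grass}, \ref{Prp:Flags}, together with Theorem \ref{Thm:3.3}, show that each of these three base points has stabilizer exactly $\Uone$. The maps $g \cdot p_i \mapsto g \cdot p_j$ between any two $FL_i, FL_j$ are then well-defined (the ambiguity in $g$ lies in $\Uone$, which fixes $p_j$) and $\SU$-equivariant by construction, delivering the required diffeomorphisms.

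The main obstacle is essentially the single matrix computation $\Uone \subset \Fi(\SU,\sigma)$; it is short but depends crucially on the sixth-root-of-unity structure of $P$. Everything else is bookkeeping, because the eigenspace decomposition of Section \ref{sc:alg} already pins down the fixed Lie algebra, and the equivariant diffeomorphism statement is forced by the uniform description $\SU/\Uone$.
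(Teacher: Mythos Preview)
Your proposal is correct and follows essentially the same line as the paper: both verify the sandwich condition of Definition~\ref{def:k-symmetric} for $\Uone$ relative to $\sigma$ (the paper simply asserts that a direct computation gives $\Fi(\SU,\sigma)=\Uone$, whereas you spell out the Lie-algebra argument via $\mathfrak g_0$), and both obtain the equivariant diffeomorphisms by descending the identity of $\SU$ through the common stabilizer $\Uone$. The only cosmetic slip is that your base point for $FL_2$ should be the \emph{pair} $(e_3,\text{flag})$, not the flag alone.
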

\begin{proof}
 First we note that the group $G^{\R} = \SU$ has the
 complexification $G = \rm{SL}_3 \C$ and is the fixed point set group of the 
 real form involution $\tau$ given in \eqref{eq:tauonG0}.

 We show that $FL_3$ is a $6$-symmetric space. 
 First note that the stabilizer 
 \begin{equation}\label{eq:stabFL3}
 \textrm{Stab}_{P} = \{ X \in \SU \;|\; X P \; X^T = P\} 
 \end{equation}
 at the point $P$ of $FL_3$
 is $\Uone$.
We already know  that the order $6$-automorphism $\sigma$ 
 of $\SU$ given
 in \eqref{eq:sigmaonG} and the real form involution $\tau$
 commute.
 Moreover,  a direct computation shows that the fixed point set of 
 $\sigma$ in $\SU$ is $\Uone$. 
 Thus $\textrm{Stab}_{P}$ satisfies 
 the condition in Definition  \ref{def:k-symmetric}. 
Hence  $FL_3$ is $6$-symmetric space  in the sense of Definition 
 \ref{def:k-symmetric}. 
 Furthermore, since all the spaces $FL_j$ are $\SU$-orbits with the same stabilizer, the identity homomorphism of $\SU$ descends for any pair of homogeneous spaces 
 $FL_j$ and $FL_m$ 
 to a diffeomorphism 
 \[
  \phi_{jm}: FL_m \rightarrow FL_j
 \] 
 such that for any $g \in \SU$ and $p\in FL_m$  we have 
\[
 \phi_{jm} (g.p) = g.\phi_{jm} (p).
\]
 As a consequence, also $FL_1$ and $FL_2$ are  $6$-symmetric spaces.
\end{proof}
\subsection{Projections from the bundles}
 We have seen that the homogeneous spaces $FL_j$ $(j=1, 2, 3)$ are 
 $7$ dimensional $6$-symmetric spaces. In this section we define
 natural projections from $FL_j$ to several homogeneous spaces.

 First from $FL_1$, we have a projection to $\SLGr(3, \C^3)$ given by
\[
 FL_1 \ni (v, V) \longmapsto V \in  \SLGr(3, \C^3).
\]
 It is easy to see that $\SLGr(3, \C^3)$ is a symmetric space with 
 the involution $\sigma^3$ defined in \eqref{eq:sigma3}.

 Next from $FL_2$, we have a projection to a full flag manifold:
\[
  FL_2 \ni (w, W) \longmapsto W \in  Fl_2,
\]
 where $Fl_2$ is defined as
\[
 Fl_2 = \{\mathcal W \;|\; \mbox{$\mathcal W$ is a regular complex flag in $\C^3$} \}.
\]
 It is easy to see that $Fl_2$ is a $3$-symmetric space with 
 the involution $\sigma^2$ stated  in \eqref{eq:sigma2}.
 
 Finally from $FL_3$, we have two projections. We first 
 let $k \in \textrm{Stab}_{P}$ as in \eqref{eq:stabFL3} with 
\[
P=
  \begin{pmatrix}
 0  & \epsilon^2 & 0 \\
 \epsilon^4 & 0 & 0 \\
0  & 0  & 1
 \end{pmatrix}, \quad \epsilon = e^{\pi i /3},
\]
  then a
 straightforward computation shows that 
\[
 k P P^T k^{-1} =   k P k^T P^T    = PP^T, \quad
 k P P^T P  k^{T} =  P (k^T)^{-1} P^T k^{-1} P  =  PP^T P.
\]
 Therefore we have two projections 
\begin{eqnarray*}
 &FL_3 \ni U P U^T \longmapsto U (P P^T) U^{-1} \in \widetilde {Fl_2}, \\
 &FL_3 \ni U P U^T \longmapsto U (P P^T P) U^T \in \widetilde \SLGr(3, \C), 
\end{eqnarray*}
 where $\widetilde {Fl_2}$ and $\widetilde \SLGr(3, \C)$ are defined as
 \[
\widetilde {Fl_2} = \{U (P P^T) U^{-1}\;|\; U \in \SU\}, \quad
\widetilde \SLGr(3, \C) = \{U (P P^T P) U^{T}\;|\; U \in \SU\}.
 \]
 Note that it is easy to compute 
 \[
  P P^T = 
\begin{pmatrix}
 \epsilon^4 & 0 & 0 \\
 0 & \epsilon^2 & 0 \\
 0 & 0 & 1
\end{pmatrix}, \quad 
  P P^T P = 
\begin{pmatrix}
 0 & 1 & 0 \\
 1 & 0 & 0 \\
 0 & 0 & 1
\end{pmatrix}, 
 \]
 and the stabilizer in $\SU$ at $P P^T$ of $\widetilde{Fl_2}$ and 
 the stabilizer in $\SU$ at $P P^T P $ of $\widetilde{\SLGr}(3, \C)$ are 
\[
\textrm{Stab}_{PP^T} = \textrm{D}_3, \quad
 \textrm{Stab}_{PP^TP} = \SO,
\]
 where 
\[
 \textrm{D}_3 = 
\{ \di (a_1, a_2, a_3) \in \SU \},
\]
and where $\textrm{Stab}_{PP^TP}$ is exactly the same group as the stabilizer
of  $\SLGr(3, \C)$. Thus  $\SLGr(3, \C)$ and $\widetilde{\SLGr}(3, \C)$ are naturally 
equivariantly diffeomorphic. An analogous argument applies to  $Fl_2$ and  $\widetilde{Fl_2}$. Now  the stabilizer of $\widetilde{Fl_2}$ is determined by  the matrix characterizing $\sigma^2$, whence $\widetilde{Fl_2}$ (and thus $Fl_2$) is the 3-symmetric space associated with $\sigma^2$. Similarly, $\SLGr(3, \C)$ (and thus $\widetilde{\SLGr}(3, \C))$ 
is the symmetric space associated with $\sigma^{3}$.

Thus we obtain:

\begin{Theorem}\label{Thm:3.5}
 We  retain the assumptions and the notion above. 
 Then the following statements hold$:$
\begin{enumerate}
\item The spaces $\SLGr(3, \C)$ and $\widetilde{\SLGr}(3, \C)$ are 
naturally equivariantly diffeomorphic 
 symmetric spaces relative to $\sigma^3$, and they are $5$-dimensional.
\item The spaces $Fl_2$ and $\widetilde{Fl_2}$ are 
naturally equivariantly diffeomorphic 
 $3$-symmetric spaces relative to $\sigma^2$, and they are $6$-dimensional.

\item The homogeneous spaces
 $\SLGr(3, \C)$ and $\widetilde{\SLGr}(3, \C)$, 
 and $Fl_2$ and $\widetilde{Fl_2}$ can be represented as
 \begin{align*}
\SLGr(3, \C) &= \SU/\SO,\quad \widetilde{\SLGr}(3, \C)  = \SU/\SO, \\ 
 Fl_2 &= \SU/D_3, \quad  \widetilde{Fl_2} = \SU/D_3.  
\end{align*}
\end{enumerate}
\end{Theorem}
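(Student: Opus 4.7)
The plan is to consolidate the stabilizer computations carried out in the paragraphs just preceding the statement and then verify the axioms of Definition~\ref{def:k-symmetric}. First I would record the four quotient presentations. Proposition~\ref{Prp:Grass} already gives $\SLGr(3,\C) = \SU/\SO$. The explicit computation displayed above the theorem shows that the $\SU$-stabilizers of $P P^T P$ and $P P^T$ are $\SO$ and $D_3$ respectively, so the orbit-stabilizer theorem yields $\widetilde{\SLGr}(3,\C) = \SU/\SO$ and $\widetilde{Fl_2} = \SU/D_3$. For $Fl_2$ itself I would repeat the argument of Proposition~\ref{Prp:Flags}, now for \emph{unbased} regular complex flags: an orthonormal basis adapted to the flag shows that $\SU$ acts transitively, and the stabilizer of the reference flag $0 \subset \C e_3 \subset \C e_3 \oplus \C e_2 \subset \C^3$ is the diagonal torus $D_3$. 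Because each pair of spaces is realized as $\SU/H$ for the \emph{same} closed subgroup $H$, the identity automorphism of $\SU$ descends to an $\SU$-equivariant diffeomorphism between the two members of each pair, which yields the ``naturally equivariantly diffeomorphic'' assertion for free.

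Second, I would check the $k$-symmetric condition. The group-level version of \eqref{eq:sigma2} reads $\sigma^2(g) = P_2 g P_2^{-1}$ with $P_2 = \di(\epsilon^4, \epsilon^2, 1)$; the pairwise distinct diagonal entries force $\Fi(\SU, \sigma^2) = D_3$, which coincides with the stabilizer for $Fl_2$ and $\widetilde{Fl_2}$. Together with the commutation $\sigma\tau = \tau\sigma$ recalled in \S\ref{subsc:realform}, this places both spaces inside Definition~\ref{def:k-symmetric} with $k = 3$. For $\sigma^3$, the group-level form of \eqref{eq:sigma3} is $g \mapsto P_3 (g^T)^{-1} P_3$, so the fixed-point equation in $\SU$ is $g P_3 g^T = P_3$, which is precisely the stabilizer of $P P^T P = P_3$; this yields the symmetric-space structure ($k = 2$) on $\SLGr(3,\C)$ and $\widetilde{\SLGr}(3,\C)$ relative to $\sigma^3$. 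The dimension count is then immediate from $\dim_\R \SU = 8$, $\dim_\R \SO = 3$, $\dim_\R D_3 = 2$, producing the asserted $5$- and $6$-dimensional quotients.

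The main technical obstacle is the careful identification of $\Fi(\SU, \sigma^3)$ with the concrete group $\SO$ used throughout the paper, which is defined (in the subsection \emph{Various bundles}) by conjugation with the matrix $H$ rather than by an abstract fixed-point condition on $P_3$. To settle this I would compare at the Lie algebra level the eigenspace sum $\mathfrak{g}_0 + \mathfrak{g}_2 + \mathfrak{g}_4$ written out explicitly in \S\ref{sc:alg} with the basis of $\mathfrak{so}_3 \subset \mathfrak{su}_3$ given earlier in the present section; once both are identified as the same $3$-dimensional real subalgebra, the exponential map together with the connectedness of $\SO$ lifts the coincidence to the group level, completing the verification.
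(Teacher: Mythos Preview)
Your proposal is correct and follows essentially the same route as the paper, which in fact does not supply a separate proof environment for this theorem at all: the argument is the running text just before the statement, where the stabilizers $\mathrm{Stab}_{PP^T}=D_3$ and $\mathrm{Stab}_{PP^TP}=\SO$ are recorded, the equivariant diffeomorphisms are read off from having identical isotropy groups, and the $k$-symmetric structure is inferred from the observation that $PP^T=P_2$ and $PP^TP=P_3$ are precisely the matrices defining $\sigma^2$ and $\sigma^3$. Your plan simply makes each of these steps explicit, including the fixed-point verifications $\Fi(\SU,\sigma^2)=D_3$ and $\Fi(\SU,\sigma^3)=\SO$ and the dimension count, which the paper leaves to the reader.
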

 We now define several projections:  
\begin{gather*}
\pi_j: \SU \to FL_j, \quad (j =1, 2, 3), 
\intertext{and}
\tilde \pi_1: FL_1 \to \SLGr(3, \C), \quad 
\tilde \pi_2: FL_2 \to Fl_2, \quad 
\tilde \pi_{3, 1}: FL_3 \to \widetilde{\SLGr}(3, \C), \quad 
\tilde \pi_{3, 2}: FL_3 \to \widetilde{Fl}_2.
\end{gather*}
 Schematically, we have the  following diagram:
\[
  \begin{diagram}
    \node[3]{\SU} \arrow{sw,l}{\pi_1} \arrow{s,l}{\pi_3} \arrow{se,l}{\pi_2} \\
    \node[2]{FL_1}\arrow{sw,l}{\tilde \pi_1} \node{FL_3}\arrow{sw,l}{\tilde\pi_{3,1}} \arrow{se,l}{\tilde\pi_{3,2}}
  \node{FL_2}\arrow{se,l}{\tilde \pi_2}  \\
    \node{\SLGr(3, \C)\;\;\cong}   \node{\widetilde{\SLGr}(3, \C)} 
  \node[2]{\widetilde{Fl_2}\quad \quad \cong} \node{Fl_2}\\
  \end{diagram}
\]

\subsection{Gauss maps}
 We now define three Gauss maps for any liftable immersion  $f : M \rightarrow \CP$
 without complex points, with $M$ a Riemann surface. 
 For our purposes in this subsection it will suffice to consider the  lift $\f$ to a map $\tilde{f} : \D \rightarrow \CP$, where $\D$ denotes the universal cover of $M$.
 Therefore we will assume from now on $M = \D$, unless the opposite is stated explicitly.
 
 So let us thus assume that $f$ is defined on a simply connected domain $\D \subset \C$ and that
 $\f$ is a special lift of $f$. Then we define the frame 
 $\mathcal F: \D \to  \U$ as in Theorem \ref{det=1} such that 
 $\det \mathcal F =1$, that is,
\begin{equation}\label{eq:normalizedframe}
 \mathcal F: \D \to \SU.
 \end{equation}
 $\mathcal F$ will be called the \textit{normalized frame}.
 Note that the function $\rho$ has been chosen now and may not coincide with $\rho_0$ as in Proposition \ref{rho=rho0}.

\begin{Definition}\label{dfn:normalizedGauss}
\mbox{}
Retain the above notation.
\begin{enumerate}
 \item 
 Consider the projections  
 $\pi_j \circ \mathcal F : \D \to FL_j$ $(j =1, 2, 3)$, where $\pi_j: \SU \to FL_j$.
 Then 
 \[
  \g_j = \pi_j \circ \mathcal  F \quad (j =1, 2, 3)
 \]
 will be called the \emph{Gauss map} of $f$ with values in $FL_j$. 
 These Gauss maps are clearly well-defined on $\D$ (independent of the choice of coordinates).
 \item Furthermore we follow the Gauss maps with the projections from $FL_j$ to 
 $\SLGr(3, \C), Fl_2,$ 
 $\widetilde{\SLGr}(3, \C)$ or $\widetilde{Fl_2}$ respectively as discussed just above, i.e.,
\[
{\mathcal H_i} = \tilde \pi_{i} \circ \pi_{i} \circ \mathcal  F \quad ({i} =1, 2), 
 \quad   
 {\mathcal{H}}_{3, i} = \tilde \pi_{3, i} \circ \pi_3 \circ \mathcal  F \quad (i =1, 2).
\]
 These maps will be called the \emph{Gauss maps} of $f$ with values in 
 $\SLGr(3, \C)$, $Fl_2$, $\widetilde{\SLGr}(3, \C)$ or $\widetilde {Fl_2}$ respectively.
\end{enumerate}

\end{Definition}
 Our definitions were a priori not very geometric.
 But by following \cite{McIn} we find analogously
 $7$ obvious geometric
 interpretations of the Gauss map.

 {\bf For $FL_1$ and $\SLGr(3, \C)$: } Let $\mathcal{G}_1 : \D \to FL_1$ be given by 
\[
 p \mapsto (\f(p), \>
 \operatorname{span}_{\R} \{ \tilde \xi(p), 
 \tilde \eta(p), \f(p) \}),
\]
 where 
\[
 \tilde \xi = -i e^{- \omega/2} \sqrt{a}^{-1}\xi, \quad 
 \tilde \eta = -i e^{- \omega/2} \sqrt{b}^{-1}\eta,
\]
 and $\f$ is a lift of $f$ such that $\det \mathcal F = 1$.
Furthermore, the Gauss map $\mathcal H_1 : \D \to \SLGr(3, \C)$ 
 is given by $\tilde{\pi}_1 \circ \mathcal G_1$, i.e.,
\[
 p \mapsto (\operatorname{span}_{\R} \{ \tilde \xi(p), 
 \tilde \eta(p), \f(p) \}).
\]

 {\bf For $FL_2$ and $Fl_2$:} Let $\mathcal{G}_2: \D \to FL_2$ be given  by 
\[
p \mapsto  
 (\f(p), 0 \subset \C \f(p) \subset  \C \f(p) \oplus \C \tilde \xi(p) 
 \subset  \C \f(p) \oplus \C \tilde \xi(p)  \oplus \C \tilde \eta (p)).
\]
Furthermore, the Gauss map $\mathcal H_2 : \D \to Fl_2$ 
 is given by $\tilde{\pi}_2 \circ \mathcal G_2$, i.e.,
\[
 p \mapsto (0 \subset \C \f(p) \subset  \C \f(p) \oplus \C \tilde \xi(p) 
 \subset  \C \f(p) \oplus \C \tilde \xi(p)  \oplus \C \tilde \eta (p)).
\]
 
 {\bf For $FL_3$, $\widetilde{\SLGr}(3, \C)$ and $\widetilde{Fl_2}$:} We observe that one can represent the Gauss map ${\g}_3$ by using 
 the frame $\mathcal  F$ defined in 
 Theorem \ref{det=1} as 
\[
 \mathcal{G}_3 = \mathcal  F  P\>\mathcal F^T, \quad \mbox{with} \quad P=
\begin{pmatrix}
 0 & \epsilon^2 & 0 \\
 \epsilon^4 & 0 & 0\\
 0 & 0 & 1
\end{pmatrix},
\]
 where $\epsilon = e^{\pi i/3}$.
Furthermore, the Gauss maps $\mathcal{H}_{3, 1} : 
 \D \to \widetilde{\SLGr}(3, \C)$ 
and $\mathcal H_{3, 2} : \D \to \widetilde{Fl_2}$
are given by $\tilde\pi_{3, i} \circ \mathcal G_3$, i.e.,
\[
{\mathcal{H}_{3, 1}:} \, p \mapsto \mathcal F (P P^T P) \mathcal F^{T}, 
\quad {\mathcal{H}_{3, 2}:}  \, p \mapsto \mathcal F (P P^T) \mathcal F^{-1}. 
\]
 
\subsection{Ruh-Vilms type theorems  associated with the Gauss maps}

We finally arrive at Ruh-Vilms type theorems.
\begin{Theorem}[Ruh-Vilms theorems for $\sigma, \sigma^2$ and $\sigma^3$]\label{Thm:3.6}
 With the notation used above we consider 
for any liftable immersion into $\C P^2$ the Gauss maps$:$
\begin{enumerate}
\item $\mathcal{G}_{j} : M \rightarrow FL_{j}$ for $j=1, 2, 3$,

\item $\mathcal H_2 = \tilde \pi_2 \circ \mathcal{G}_2: 
 M \rightarrow  Fl_2$ and 
 $\mathcal H_{3, 2} = \tilde \pi_{3, 2} \circ  \mathcal{G}_3 : 
 M \rightarrow  \widetilde{Fl_2}$,

\item $\mathcal H_1 = \tilde \pi_1 \circ  \mathcal{G}_1 : 
 M \rightarrow  \SLGr(3, \C)$ and 
 $\mathcal H_{3, 1} = \tilde \pi_{3, 1} \circ  \mathcal{G}_3 : 
 M \rightarrow  \widetilde{\SLGr(3, \C)}$.

\end{enumerate}
Then the following statements hold$:$
\begin{enumerate}
 \item  $\mathcal{G}_j$ $(j=1, 2, 3)$ is primitive harmonic map into $FL_{j}$  
 if and only if  $\mathcal F$ is primitive harmonic relative to $\sigma$ 
 if and only if the corresponding surface is a minimal Lagrangian 
 immersion into $\C P^2$.

\item $\mathcal H_2$ or $\mathcal H_{3, 2}$ is primitive harmonic  in $Fl_2$ or $ \widetilde{Fl_2}$
 if and only if $\mathcal F$  
is primitive harmonic relative to $\sigma^2$ if and only if 
the corresponding surface is a minimal immersion into
$\C P^2$ without complex points.

\item  $\mathcal H_1$ or $\mathcal H_{3, 1}$ is primitive harmonic map
 into $\SLGr(3, \C)$ or $\widetilde{\SLGr(3, \C)}$ 
 if and only if  $\mathcal F$ is primitive harmonic
 relative to $\sigma^3$ if and only if 
 the corresponding surface is either a minimal Lagrangian immersion 
 or a flat homogeneous immersion  into $\C P^2$.
\end{enumerate}

\end{Theorem}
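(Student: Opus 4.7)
The strategy is to reduce each of the three assertions to two chains of equivalence. For case $(j)$, I aim to show:
\emph{Chain 1.} The Gauss map in question is primitive harmonic if and only if the normalized frame $\mathcal F$ is primitive harmonic relative to $\sigma^{m}$, where $m=1, 2, 3$ in cases $(1), (2), (3)$ respectively.
\emph{Chain 2.} $\mathcal F$ is primitive harmonic relative to $\sigma^{m}$ if and only if the surface satisfies the stated geometric condition.
Chain 2 is already in hand: its ``only if'' direction is Theorem \ref{equivprimitive} and its ``if'' direction is Theorem \ref{thm:primitive}. So the new content lies entirely in Chain 1.

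To establish Chain 1, I would first identify each target space as $\SU/G^\R_0$ with $G^\R_0 = \Fi(\SU, \sigma^m)$. A direct matrix computation confirms $\Fi(\SU, \sigma) = \Uone$, $\Fi(\SU, \sigma^2) = D_3$, and $\Fi(\SU, \sigma^3) = \SO$ (in the version used throughout Section \ref{sc:Ruh-Vilms}), matching exactly the stabilizers obtained in Theorems \ref{Thm:3.3} and \ref{Thm:3.5}. Thus each Gauss map equals $\pi \circ \mathcal F$ for the canonical projection $\pi:\SU \to \SU/G^\R_0$, and $\mathcal F$ itself serves as a frame in the sense of Theorem \ref{theoremharmonicity}. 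The key observation is gauge invariance: replacing $\mathcal F$ by $\mathcal F k$ with $k:\D \to G^\R_0$, the adjoint action $\ad(k^{-1})$ preserves each $\sigma^m$-eigenspace (since $\Lie G^\R_0 \subset \mathfrak g_0^{\sigma^m}$) and $k^{-1}dk$ takes values in $\mathfrak g_0^{\sigma^m}$. Consequently the decomposition of the Maurer-Cartan form into $\sigma^m$-eigenspaces, as well as the integrability of its $\lambda$-deformation, depends only on the projected map. By the definition of primitive harmonic map in Section \ref{sc:alg}, $\pi \circ \mathcal F$ is primitive harmonic precisely when $\mathcal F$ is primitive harmonic relative to $\sigma^m$, yielding Chain 1.

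Two auxiliary points remain. Within case $(1)$, the three Gauss maps $\mathcal G_1, \mathcal G_2, \mathcal G_3$ project $\mathcal F$ to equivariantly diffeomorphic $6$-symmetric spaces $FL_1, FL_2, FL_3$ by Corollary \ref{cor:3.4}, so they are simultaneously primitive harmonic or not; the analogous remark applies to the pairs $\{\mathcal H_2, \mathcal H_{3,2}\}$ and $\{\mathcal H_1, \mathcal H_{3,1}\}$ via Theorem \ref{Thm:3.5}. In case $(3)$, where $\sigma^3$ has order $2$, the ``primitive'' condition collapses to harmonicity by Theorem \ref{theoremharmonicity}(1); the same gauge-invariance argument applies verbatim, now with gauge group $\SO = \Fi(\SU, \sigma^3)$. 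The step I expect to require the most care is the bookkeeping confirming that the $k$-symmetric space structure from Definition \ref{def:k-symmetric} agrees with the one implicit in each geometric construction of $FL_j$, $Fl_2$, $\SLGr(3,\C)$ and their tilded variants. Once these identifications are cleanly recorded, the primitive-harmonicity statement for each Gauss map becomes a direct consequence of the definitions together with Theorems \ref{equivprimitive} and \ref{thm:primitive}.
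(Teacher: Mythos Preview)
Your proposal is correct and follows essentially the same approach as the paper, whose proof simply observes that the first equivalence in each case is the very definition of a primitive harmonic map into a $k$-symmetric space (your Chain~1), and that the second equivalence is Theorem~\ref{equivprimitive} (your Chain~2). One minor point: Theorem~\ref{equivprimitive} is already stated as an ``if and only if'', so it covers both directions of Chain~2 and the appeal to Theorem~\ref{thm:primitive} is not needed.
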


\begin{proof}
The first equivalence in $(1)$ is due to the definition of primitive harmonicity 
 into a $k$-symmetric space. The second equivalence has been stated in 
 Theorem \ref{equivprimitive}. The proofs for $(2)$ and $(3)$ are similar. 
\end{proof}
\begin{Remark}
 We would like to point out that the result above is not contained in \cite{McIn}.
\end{Remark}

\appendix\section{}\label{appendix}

In this appendix, we discuss the liftability of an immersion $f: M \to \C P^2$
 into $S^5$.
\subsection{The non-compact case}

 \begin{Theorem}
 Let $\D \subset \C$ be a simply-connected domain and $f:\D \rightarrow \C P^2$ an immersion without complex points.
 Let $\f_0: \D \rightarrow S^5$ be a lift of $f$ and $\mathcal{F}(\f_0 )$ the corresponding frame. 
 Then
 \begin{enumerate}
  \item[a)] There exists some smooth function $\delta: \D \rightarrow S^1$ such that 
 $\det  \mathcal{F}(\delta\f_0 ) = 1$.
 
 \item[b)] Any two lifts $\f_0$ and  $\f_1$ of $f$ for which   $\det  \mathcal{F}(\f_0 ) = 1$ and 
  $\det  \mathcal{F}(\f_1 ) = 1$ differ by  a cubic root of unity.
 \end{enumerate}

  \end{Theorem}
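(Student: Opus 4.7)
The plan is to reduce both parts to the single transformation law describing how $\det \mathcal{F}$ changes when the lift $\f$ is rescaled by an $S^1$-valued function, and then to exploit the simply-connectedness of $\D$ (for part (a)) and the discreteness of the cube roots of unity (for part (b)).

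First I would record the transformation rule. From the remarks in Section \ref{susc:1.3} we already know that $a$, $b$ and $\omega$ are unaltered when $\f$ is replaced by $h\f$ with $h:\D\to S^1$, and that $\xi[h\f]=h\xi[\f]$, $\eta[h\f]=h\eta[\f]$. Consequently $\tilde{\mathcal F}(h\f)=h\,\tilde{\mathcal F}(\f)$ as a matrix, while the diagonal gauge matrix $\mathcal R$ (which depends only on $a$, $b$, $\omega$) is unchanged. Since $\mathcal{F}=\tilde{\mathcal F}\mathcal R$, taking determinants of a $3\times 3$ matrix gives the crucial identity
\begin{equation*}
\det \mathcal{F}(h\f) \;=\; h^{3}\,\det \mathcal{F}(\f),
\end{equation*}
valid for every smooth $h:\D\to S^1$.

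For part (a) I would use that $\mathcal{F}(\f_0)$ is unitary, so $\det \mathcal F(\f_0)$ is an $S^1$-valued smooth function on $\D$. Because $\D$ is simply connected, it admits a continuous (hence smooth) logarithm: write $\det \mathcal F(\f_0)=e^{i\theta}$ with $\theta:\D\to\R$ smooth. Setting $\delta:=e^{-i\theta/3}$ yields $\delta^{3}\det\mathcal F(\f_0)=1$, and by the transformation rule $\det \mathcal F(\delta\f_0)=1$ as required. For part (b), any two lifts of $f$ differ by some smooth $h:\D\to S^1$, i.e., $\f_1=h\f_0$. The normalisation $\det \mathcal F(\f_0)=\det \mathcal F(\f_1)=1$ combined with the transformation rule forces $h^{3}\equiv 1$ on $\D$, so $h$ takes values in the discrete set $\{1,\epsilon^{2},\epsilon^{4}\}$ of cube roots of unity; continuity of $h$ and connectedness of $\D$ then imply $h$ is one of these constants.

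I expect no deep obstacle: the only non-routine point is a clean verification of the formula $\det \mathcal{F}(h\f)=h^{3}\det \mathcal{F}(\f)$, which makes essential use of the $S^1$-invariance of $a,b,\omega$ noted earlier (and of the fact that one can still re-absorb the same scalar $h$ into the auxiliary lift $\f_M$ on $M$ so that the commutative diagrams of Section \ref{susc:1.3} are preserved, as mentioned in the remark following Proposition \ref{rho=rho0}). Once that identity is in hand, both assertions are immediate, the first from lifting $S^1$-valued maps on contractible domains and the second from the fact that a continuous map from a connected space into a finite set is constant.
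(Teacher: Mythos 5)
Your proposal is correct and follows essentially the same route as the paper: normalize by a cube root of $\det\mathcal{F}(\f_0)$, which exists smoothly because $\D$ is simply connected, and for (b) use that two lifts differ by an $S^1$-valued $h$ with $h^3\equiv 1$, hence $h$ is a constant cube root of unity. The only difference is that you spell out explicitly the identity $\det\mathcal{F}(h\f)=h^3\det\mathcal{F}(\f)$ (via $\xi[h\f]=h\xi[\f]$, $\eta[h\f]=h\eta[\f]$ and the invariance of $a,b,\omega$, hence of $\mathcal R$), which the paper uses implicitly.
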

 
 \begin{proof}
 a) Put $\delta_0 =  \det \mathcal{F}(\f_0 ).$ Then $\delta_0 : \D \rightarrow S^1$ is smooth. Since $\D$ is simply-connected we can define the smooth function
 $\delta = \delta_0^{-1/3}: \D \rightarrow S^1$, then  $\det  \mathcal{F}(\delta \f_0 ) = 1$.
 
 b) Assume  $\det  \mathcal{F}(\f_0 ) = \det  \mathcal{F}(\f_1 ) = 1.$ Since $\f_0$ and $\f_1$ are both lifts of $f$ on $\D$, there exists some smooth function $h: \D \rightarrow S^1$ such that 
 $\f_1 = h \f_0$ holds. Then  $ \det  \mathcal{F}(\f_1 ) = \det  \mathcal{F}(h \f_0 ) = 1$
 implies $h^3 = 1$. Hence $h$ is a constant.
 \end{proof}
 
 From this we derive
 
 \begin{Theorem}
 Let $M$ be a non-compact Riemann surface and $f:M \rightarrow \C P^2$ an immersion without complex points. Then there exists a global lift $\f : M \rightarrow S^5$.
 \end{Theorem}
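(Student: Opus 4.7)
The plan is to reduce the existence of a global lift to a statement about the topological triviality of an $S^1$-bundle over $M$, using the fact that a non-compact connected Riemann surface is homotopy equivalent to a $1$-dimensional CW complex.

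First, imitating the first lemma of Section \ref{susc:1.2}, I would consider the pullback $E := f^* S^5 \to M$, which is a principal $S^1$-bundle. A global lift $\f : M \to S^5$ satisfying $\pi \circ \f = f$ corresponds exactly to a global smooth section of $E$ followed by the canonical bundle map $E \to S^5$, so it suffices to produce such a section. Note that $f$ being an immersion without complex points plays no role here; only continuity is used.

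Second, I would invoke the classification of principal $S^1$-bundles: over a paracompact space, they are classified by $H^2(M;\Z)$. For a non-compact connected Riemann surface $M$, standard topology gives $H^2(M;\Z)=0$---for instance, because $M$ admits a proper smooth Morse exhaustion function all of whose critical points have index $0$ or $1$, so that $M$ has the homotopy type of a $1$-dimensional CW complex. Consequently $E$ is trivial and admits a global section, which yields the desired lift $\f$.

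I expect the only non-routine ingredient to be the vanishing of $H^2(M;\Z)$ for non-compact Riemann surfaces; this is a classical topological fact, but it is the one place where non-compactness is genuinely used. The alternative route of working on the universal cover $\tilde{\pi}:\D \to M$, lifting $f \circ \tilde{\pi}$ by the previous theorem, and trying to descend by normalizing $\det \mathcal{F}(\f)=1$ would produce a representation $\pi_1(M)\to \mu_3$ of deck transformations into cube roots of unity, whose vanishing is not automatic; thus the direct topological argument via $H^2$ is the cleaner path, and this is what I would follow.
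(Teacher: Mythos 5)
Your proposal is correct, but it follows a genuinely different route from the paper. You trivialize the pullback circle bundle $f^*S^5$ by a purely topological argument: principal $S^1$-bundles are classified by $H^2(M;\Z)$, which vanishes for a non-compact surface since $M$ has the homotopy type of a $1$-dimensional CW complex; a global section then gives the lift. The paper instead argues complex-analytically through the frame machinery it has set up: it takes local lifts $\f_\alpha$ normalized so that $\det\mathcal{F}(\f_\alpha)=1$, observes that on overlaps two such lifts differ by a constant cube root of unity, hence the transition functions form a holomorphic (in fact locally constant) cocycle, and splits this cocycle using the vanishing of the relevant cohomology on a non-compact Riemann surface (Forster, Cor.\ 30.5, i.e.\ Stein-type theory). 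Your argument is more elementary in its input and strictly more general --- it needs neither the immersion hypothesis nor the absence of complex points, only continuity of $f$ and paracompactness --- whereas the paper's proof is tied to the normalized frames, which is exactly the structure it exploits again in the compact case: there the $H^2$ argument is unavailable, and the cube-root cocycle you correctly anticipate in your last paragraph becomes the homomorphism $\pi_1(M)\to\mathbb{A}_3$ whose nontriviality forces the $3$-fold cover. So your route buys generality and brevity for the non-compact case, while the paper's buys a uniform mechanism that also controls the obstruction in the general case and keeps the lift compatible with the determinant normalization used later.
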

 
 \begin{proof}
 Let $\{ U_\alpha \}$ be an open covering of $M$ by open contractible subsets (disks).
 Then on each $U_\alpha$ there exists some lift $\f_\alpha: U_\alpha \rightarrow S^5$ of 
 $f_{| U_\alpha}$ such that  $ \det  \mathcal{F}( \f_\alpha ) = 1$ holds.
 On the intersection $U_\alpha \cap U_\beta$ we consider a connected 
 component $C_{\alpha \beta}^\iota.$ Then $\f_\alpha = h_{\alpha \beta}^\iota 
 \f_\beta$ 
 on $ C_{\alpha \beta}^\iota $ with some unique smooth function 
 $h_{\alpha \beta}^\iota : C_{\alpha \beta}^\iota  \rightarrow  S^1$.
 Now $\mathcal{F}(\f_{\alpha}) =  \mathcal{F}(  h_{\alpha \beta}^\iota  
 \f_{\beta}) =  (h_{\alpha \beta}^\iota)^{3} \mathcal{F}(\f_{\beta} ) $ and the requirement that  $\det  \mathcal{F}(\f_\alpha ) = \det  \mathcal{F}(\f_\beta ) = 1$ holds implies that $ h_{\alpha \beta}^\iota $ is a cubic root of unity.
 In particular,  $h_{\alpha \beta}^\iota$ is constant and thus holomorphic.
 Altogether we obtain $f_\alpha = h_{\alpha \beta} f_\beta$ on $U_\alpha \cap U_\beta$ with a holomorphic function   $h_{\alpha \beta}$ on $U_\alpha \cap U_\beta.$
 It is easy to verify that the family of   $h_{\alpha \beta}$  is a cocycle. 
 Since we have assumed that $M$ is non-compact, the cocycle 
 $\{ h_{\alpha \beta} \}$ splits (see, e.g. \cite{Forster}, Corollary 30.5). Therefore there exist holomorphic functions 
 $w_\alpha$ on $U_\alpha$  satisfying $h_{\alpha \beta} = w_\alpha^{-1} w_\beta.$
 As a consequence the family  of $w_\alpha \f_\alpha$ defines a globally defined function 
 $\f : M \rightarrow S^5$ and thus a global lift of $f$.
 \end{proof}
 
 \begin{Remark}
\mbox{}
\begin{enumerate}
 \item The frame corresponding to $\f$, as in the last theorem,  generally speaking only makes sense if $\f$ is defined on a simply-connected open subset of $\C.$ As a consequence, the condition $\det  \mathcal{F}(\f) = 1$ only makes sense on $\D$.
 
\item  If $M$ is compact, then one can repeat the argument above with a meromorphic splitting.
 Hence one needs to admit (finitely many) singularities in the global lift $\f$.
\end{enumerate}

 \end{Remark}
 
 
 \subsection{The general case}
Recall that we assume that $M$ is different from $S^2$. We use this right below,
  when we state that  $\tilde{f}: \D \rightarrow \C P^2$  has a lift 
  $\tilde{\f}: \D \rightarrow S^5.$
   This is proven by considering the pull back bundle and using that $\D$ is contractible.
 \begin{Proposition}
 Let  $f : M \rightarrow \C P^2$ be an immersion without complex points and $\tilde{f} : \D \rightarrow \C P^2$ denote the lift $\tilde{f} = f \circ \tilde \pi $ of $f$ to the universal cover $\tilde \pi : \D \rightarrow M$.
 Then  $\tilde{f} $ has a lift  $\tilde{\f} : \D \rightarrow S^5 $ and the following statements hold
 \begin{enumerate}
 \item For $\gamma \in \pi_1(M)$, acting on $\D$ by M\"{o}bius transformations, we obtain that also  $\gamma^*\tilde{\f} $  is a lift of  $\tilde{f} .$
 
 \item For all $\gamma \in \pi_1 (M)$ we have 
 $(\gamma^*\tilde{\f} ) (z, \bar{z}) = c(\gamma, z, \bar{z}) \tilde{\f} (z, \bar z)$  with $c$ taking values in $S^1$.
 
 \item After  multiplying $\tilde{\f}$ by a scalar multiple in $S^1$ we can assume without loss of generality that 
  $\mathcal{F}(\tilde{\f} ) $ is contained in $\SU$.
  
  \item For $\tilde{f}$ as just above and $\gamma \in \pi_1 (M)$ we obtain
  \begin{equation}
  \gamma^*( \mathcal{F}(\tilde{\f} ) )(z, \bar z) = c(\gamma, z ,\bar z) \mathcal{F}(\tilde{\f} ) (z, \bar z) k(\gamma, z ,\bar z),
  \end{equation}
  with $k(\gamma, z ,\bar z) = \di (|\gamma'| / \gamma', |\gamma'| / \bar{\gamma}',1)$, where $\gamma^{\prime}=\gamma_z$.
 \end{enumerate}
 
  \end{Proposition}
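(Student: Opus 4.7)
Parts $(1)$ and $(2)$ I would handle formally. Since $\tilde \pi \circ \gamma = \tilde \pi$ for every deck transformation $\gamma \in \pi_1(M)$, one has $\gamma^* \tilde f = \tilde f$, and hence $\pi \circ (\gamma^* \tilde \f) = \gamma^*(\pi \circ \tilde \f) = \gamma^* \tilde f = \tilde f$, so $\gamma^* \tilde \f$ is again a lift of $\tilde f$. The observation made immediately after the definition of liftable immersions---two lifts of a common map to $\C P^2$ differ by a smooth $S^1$-valued function---then supplies the cocycle $c(\gamma, \cdot) : \D \to S^1$ in $(2)$. For $(3)$, since $\mathcal F(\tilde \f) \in \U$, the determinant $\det \mathcal F(\tilde \f)$ is a smooth $S^1$-valued function on the simply connected disc, so it admits a smooth cubic root; replacing $\tilde \f$ by $h \tilde \f$ with $h = (\det \mathcal F(\tilde \f))^{-1/3}$ and using $\xi[h \tilde \f] = h\xi[\tilde \f]$, $\eta[h \tilde \f] = h \eta[\tilde \f]$ (together with the invariance of $a, b, \omega$ under multiplication by an $S^1$-valued factor) gives $\mathcal F(h \tilde \f) = h \mathcal F(\tilde \f)$, and hence $\det \mathcal F(h \tilde \f) = h^3 \det \mathcal F(\tilde \f) = 1$, as needed.

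The heart of the matter is $(4)$. I plan to compute $(\gamma^* \mathcal F(\tilde \f))(z) = \mathcal F(\tilde \f)(\gamma(z))$ directly via the factorization $\mathcal F = \tilde{\mathcal F}\, \mathcal R$. Applying $\partial_z$ to $\tilde \f \circ \gamma$ produces a factor $\gamma'(z)$, and the orthogonal projection built into $\xi$ absorbs the cross-term proportional to $\tilde \f$, just as in the derivation of $\xi[h \tilde \f] = h \xi[\tilde \f]$. Comparing the chain-rule expression for $\xi[\gamma^* \tilde \f]$ with the $S^1$-equivariance expression $\xi[c \tilde \f] = c\, \xi[\tilde \f]$ yields
\[
\xi(\gamma(z)) = \frac{c(z)}{\gamma'(z)}\, \xi(z), \qquad \eta(\gamma(z)) = \frac{c(z)}{\overline{\gamma'(z)}}\, \eta(z),
\]
and in matrix form $\tilde{\mathcal F}(\tilde \f)(\gamma(z)) = c(z)\, \tilde{\mathcal F}(\tilde \f)(z)\, \di\bigl(\gamma'^{-1},\, \overline{\gamma'}^{-1},\, 1\bigr)$.

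Finally I would track $\mathcal R = \di(-ie^{-\omega/2}\sqrt{a}^{-1},\, -ie^{-\omega/2}\sqrt{b}^{-1},\, 1)$. The invariants $a, b$ are unchanged by $\gamma$; conformality $\gamma^* g = g$ with $g = 2e^\omega dz\, d\bar z$ forces $e^{-\omega(\gamma(z))/2} = |\gamma'(z)|\, e^{-\omega(z)/2}$, so $\mathcal R(\gamma(z)) = \mathcal R(z)\, \di(|\gamma'|,\, |\gamma'|,\, 1)$. Assembling the two factors and commuting $\di(\gamma'^{-1},\, \overline{\gamma'}^{-1},\, 1)$ past the diagonal matrix $\mathcal R(z)$ gives
\[
\mathcal F(\tilde \f)(\gamma(z)) = c(\gamma, z, \bar z)\, \mathcal F(\tilde \f)(z)\, \di\bigl(|\gamma'|/\gamma',\ |\gamma'|/\overline{\gamma'},\ 1\bigr),
\]
which is precisely the claimed formula with $k(\gamma, z, \bar z) = \di(|\gamma'|/\gamma',\, |\gamma'|/\overline{\gamma'},\, 1)$. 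The main technical hurdle is the chain-rule identity for $\xi$ and $\eta$: one must verify carefully that the $\partial_z c$ cross-term is killed by the horizontal projection, so that only the $\gamma'$-factor from differentiating $\tilde \f \circ \gamma$ survives; once this is in hand, the transformation of the conformal factor and the routine commutation of diagonal matrices finish the proof.
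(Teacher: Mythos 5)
Your proposal is correct and follows essentially the same route as the paper: parts (1)--(3) are handled by the same formal observations (composition with the Hopf fibration, the $S^1$-ambiguity of lifts, and a cubic root of $\det\mathcal{F}$ on the simply connected domain), and for (4) the paper performs exactly your computation, using $\tilde{\f}\cdot\bar{\tilde{\f}}=1$ to kill the $c_z$ cross-term so that $\gamma^*(\xi(\tilde{\f}))=(\gamma')^{-1}c\,\xi(\tilde{\f})$, $\gamma^*(\eta(\tilde{\f}))=(\bar{\gamma}')^{-1}c\,\eta(\tilde{\f})$, together with the isometry property $\gamma^*(e^{\omega})|\gamma'|^2=e^{\omega}$ and the invariance of $a,b$ to assemble the factor $k(\gamma,z,\bar z)$. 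Your write-up is, if anything, slightly more explicit than the paper in tracking the diagonal factor $\mathcal{R}$.
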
 
  
  \begin{proof}
$(1)$ This can be deduced directly after composing  these maps with the Hopf fibration.
  
$(2) $ This just rephrases that both maps are lifts of $\tilde f$.
  
 $(3)$ As pointed out in the remark above  this can be done since the frame is defined on a simply-connected domain.
  
$(4)$ This claim will follow from a series of simple statements:
  
First by the chain rule we have 
$(\gamma^* \tilde{\f})_z  = \partial_z( \tilde{\f} \circ \gamma) =  \tilde{\f}_z \circ \gamma \cdot \gamma'$. Then it follows that
\begin{align*}
\gamma^* (\xi(\tilde{\f}) )&=\gamma^*\tilde{\f}_z-(\gamma^*\tilde{\f}_z\cdot \overline{\gamma^*\tilde{\f}}) \gamma^*\tilde{\f}\\
&=\frac{1}{\gamma^{\prime}}(\gamma^*\tilde{\f})_z-(\frac{1}{\gamma^{\prime}}(\gamma^*\tilde{\f})_z
\cdot \overline{\gamma^*\tilde{\f}}) \gamma^*\tilde{\f}\\
&=\frac{1}{\gamma^{\prime}} \{ (c\tilde{\f})_z-((c\tilde{\f})_z \cdot \overline{c\tilde{\f}}) c \tilde{\f}\}\\
&=\frac{1}{\gamma^{\prime}} \{ c_z\tilde{\f} +c\tilde{\f}_z -((c_z \tilde{\f}+c\tilde{\f}_z)\cdot \bar{\tilde{\f}}) \tilde{\f} \}\\
&=\frac{1}{\gamma^{\prime}} c \xi(\tilde{\f}).
\end{align*}
  That is, $\gamma^*( \xi(\tilde{\f}) ) = (\gamma')^{-1} c(\gamma, \cdot) \xi(\tilde{\f})$.
  Similarly, we obtain
  $\gamma^*( \eta(\tilde{\f}) ) = (\bar {\gamma}')^{-1} c(\gamma, \cdot) \eta (\tilde{\f})$.
  On the other hand, since $\gamma$ acts on $\D$ by isometries, 
   $e^{\omega} dz d \bar{z} = \gamma^* ( e^{\omega} dz d \bar{z} ) = \gamma^*(e^\omega) |\gamma '|^2 dz d \bar{z}$.
   Moreover, the functions $a$ and $b$ are independent of the choice of $\tilde{\f}$.
  Putting this together we obtain for the frame  $\mathcal{F}(\tilde{\f} ) $ the claim.
  \end{proof}

 \begin{Corollary}
 In view of the fact that we can assume $ \det  \mathcal{F}( \tilde{\f}) = 1,$ the transformation formula above for the frame implies $c(\gamma, z , \bar z)^3 = 1$ and thus 
 \begin{equation}
 c(\gamma, z , \bar z) = c(\gamma) \in S^1
 \end{equation}
 for all $\gamma \in  \pi_1(M)$. 
 In particular, $c: \pi_1 (M) \rightarrow S^1$ is a homomorphism with values in the  group 
 $\mathbb{A}_3$ of cubic  
 roots of unity, whence the image of $c$ is either 
 $\{e \}$ or all of $\mathbb{A}_3$.
  \end{Corollary}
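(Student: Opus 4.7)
The plan is to extract the pointwise constraint on $c$ by taking determinants of the frame transformation formula, then upgrade this local algebraic constraint to global constancy in $(z,\bar{z})$ using connectedness, and finally deduce the homomorphism property from the cocycle behavior of deck transformations.

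First, I would compute $\det k(\gamma, z, \bar z) = \frac{|\gamma'|}{\gamma'} \cdot \frac{|\gamma'|}{\bar\gamma'} \cdot 1 = \frac{|\gamma'|^2}{\gamma'\bar\gamma'} = 1$. Taking determinants on both sides of the transformation formula
\[
\gamma^*\!\left(\mathcal{F}(\tilde{\f})\right)(z,\bar z) = c(\gamma, z, \bar z)\, \mathcal{F}(\tilde{\f})(z,\bar z)\, k(\gamma, z, \bar z),
\]
and using that both $\mathcal{F}(\tilde{\f})$ and its pullback have determinant $1$ (by the $\SU$-normalization), the scalar factor must satisfy $c(\gamma, z, \bar z)^3 = 1$. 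Since $c(\gamma, \cdot)$ is smooth, takes values in $S^1$, and lies pointwise in the discrete set $\mathbb{A}_3$ of cubic roots of unity, connectedness of $\D$ forces $c(\gamma, \cdot)$ to be constant. We may therefore write $c(\gamma) \in \mathbb{A}_3$ unambiguously.

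For the homomorphism property, I would apply the transformation formula twice: on the one hand $(\gamma_1 \gamma_2)^* \tilde{\f} = c(\gamma_1\gamma_2)\tilde{\f}$, and on the other hand, using $(\gamma_1\gamma_2)^* = \gamma_2^* \circ \gamma_1^*$ together with the fact that $c(\gamma_1)$ is a constant scalar invariant under $\gamma_2^*$, one gets $(\gamma_1\gamma_2)^*\tilde{\f} = \gamma_2^*(c(\gamma_1)\tilde{\f}) = c(\gamma_1)\gamma_2^*\tilde{\f} = c(\gamma_1)c(\gamma_2)\tilde{\f}$. Comparing yields $c(\gamma_1\gamma_2) = c(\gamma_1)c(\gamma_2)$. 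Finally, since the image of $c$ is a subgroup of $\mathbb{A}_3 \cong \mathbb{Z}/3\mathbb{Z}$ and the latter has prime order, the image is either trivial or all of $\mathbb{A}_3$.

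There is no substantive obstacle here, as the statement is essentially a direct consequence of the preceding Proposition. The only care needed is the bookkeeping to ensure $c$ is genuinely scalar (so that it passes through pullbacks) and that local-constancy arguments apply; the prime order of $\mathbb{A}_3$ then does the rest automatically.
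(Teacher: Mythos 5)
Your proposal is correct and follows exactly the argument the paper intends (the corollary is stated with its justification built in and no separate proof): take determinants in the frame transformation formula, use $\det\mathcal{F}(\tilde{\f})=1$ and $\det k=1$ to get $c^3=1$, conclude constancy from continuity into the discrete set $\mathbb{A}_3$ on the connected domain $\D$, and obtain the homomorphism property from composing deck transformations, with the prime order of $\mathbb{A}_3$ giving the final dichotomy. Your filled-in details (the computation $\det k = |\gamma'|^2/(\gamma'\bar{\gamma}')=1$ and the pullback identity $(\gamma_1\gamma_2)^*=\gamma_2^*\circ\gamma_1^*$) are exactly the bookkeeping the paper leaves implicit.
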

  
  From this we derive the following
  \begin{Theorem}
  Let $M$ be a Riemann surface, different from $S^2$, and $f:M \rightarrow \C P^2$ an immersion without complex points. 
  Let $\tilde \pi: \D \rightarrow M$ denote the universal covering of $M$ and 
  $\tilde{f} = f \circ \tilde \pi : \D \rightarrow \C P^2$ the natural lift of $f$ to $\D$.
  Let $\tilde{\f} : \D \rightarrow S^5 $ denote a lift of $\tilde{f}$  satisfying 
  $ \det  \mathcal{F}( \tilde{\f}) = 1.$ 
  Let $c: \pi_1 (M) \rightarrow S^1$ denote the homomorphism induced by $\tilde{\f}$ and put
  $\Gamma = \ker (c)$. Furthermore, define the Riemann surface   
  $\hat{M} = \Gamma \backslash  \D$.
  Then the following statements hold$:$
  
\begin{enumerate}
 \item[a)] The definitions above induce naturally a sequence of coverings 
  \begin{equation}
 \xymatrix{\D \ar[r]^{\hat{\pi}}&\hat{M}\ar[r]^{\tau}&M,}
  \end{equation}
   where the first map is denoted by $\hat{\pi}$ and the second map is denoted by $\tau$. Recall that our definitions imply $\pi = \tau \circ \hat{\pi}$.
  Moreover, the covering map $\tau$ has either order $1$ or order $3$.
  
\item[b)] Putting $\hat{f} = f \circ \tau: \hat{M} \rightarrow  \C P^2$ we obtain the commuting diagram,
  \[
  \begin{diagram}
    \node{\D}  \arrow{r,l}{\tilde{\f}} \arrow{s,l}{\hat{\pi}} \node{S^5} \arrow{s,r}{\pi}  \\
    \node{\hat{M}}\arrow{s,l}{\tau}\arrow{ne,l}{\hat{\f}} \arrow{r,l}{\hat{f}}\node{\C P^2} \\
    \node{M} \arrow{ne,r}{f}
  \end{diagram}
\]
  where  $\hat{\f} : \hat{M} \rightarrow S^5$ is the naturally global lift of $\hat{f}$.
  Then, either $\hat{M} = M$ and $f$ itself has a global lift or $\tau: \hat{M} \rightarrow M$ has order three and $\hat{M}$ has the global lift $\hat{\f} $. 
\end{enumerate}

  \end{Theorem}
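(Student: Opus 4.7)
The plan is to exploit that $\Gamma=\ker(c)$ is a subgroup of $\pi_1(M)$, construct $\hat{M}$ as the corresponding intermediate covering of $M$, and push $\tilde{\f}$ down to $\hat{M}$. For part (a), standard covering space theory gives immediately the sequence $\D \xrightarrow{\hat{\pi}} \hat{M} \xrightarrow{\tau} M$ with $\tilde{\pi}=\tau\circ\hat{\pi}$: the surface $\hat{M}$ is the quotient of $\D$ by the action of $\Gamma$ by deck transformations, and $\tau$ is the residual covering of degree $[\pi_1(M):\Gamma]=|\mathrm{im}(c)|$. By the corollary preceding the theorem statement, $\mathrm{im}(c)$ is a subgroup of $\mathbb{A}_3$, hence has order $1$ or $3$, which gives the dichotomy for the degree of $\tau$.

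For part (b) the essential step is to show that $\tilde{\f}$ descends through $\hat{\pi}$. By the proposition preceding the corollary, for every $\gamma\in\pi_1(M)$ one has $\gamma^{*}\tilde{\f}=c(\gamma,z,\bar z)\,\tilde{\f}$, and after the normalization $\det\mathcal{F}(\tilde{\f})=1$ the factor $c(\gamma,z,\bar z)$ collapses by the corollary to a cubic root of unity $c(\gamma)$ depending only on $\gamma$. By definition of $\Gamma=\ker(c)$, we have $c(\gamma)=1$ for every $\gamma\in\Gamma$, so $\gamma^{*}\tilde{\f}=\tilde{\f}$; in other words $\tilde{\f}$ is constant on $\Gamma$-orbits in $\D$. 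Hence there is a unique smooth map $\hat{\f}:\hat{M}\to S^{5}$ with $\tilde{\f}=\hat{\f}\circ\hat{\pi}$.

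It remains to verify that $\hat{\f}$ is a lift of $\hat{f}=f\circ\tau$. I would compute
\begin{equation*}
\pi\circ\hat{\f}\circ\hat{\pi}=\pi\circ\tilde{\f}=\tilde{f}=f\circ\tilde{\pi}=f\circ\tau\circ\hat{\pi}=\hat{f}\circ\hat{\pi},
\end{equation*}
and then use that $\hat{\pi}$ is surjective to conclude $\pi\circ\hat{\f}=\hat{f}$. This yields the commutative diagram stated in the theorem. The dichotomy is then automatic: if $|\mathrm{im}(c)|=1$ then $\Gamma=\pi_1(M)$, so $\hat{M}=M$ and $\hat{\f}$ is already a global lift of $f$ itself; otherwise $|\mathrm{im}(c)|=3$, $\tau$ is a threefold covering, and $\hat{\f}$ is a global lift of the lifted immersion $\hat{f}$.

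The only real obstacle is verifying that the cocycle factor $c(\gamma,z,\bar z)$ in the previous proposition is in fact independent of $(z,\bar z)$ on the descent, which is precisely what the preceding corollary guarantees once the determinant normalization $\det\mathcal{F}(\tilde{\f})=1$ has been imposed. With that normalization in hand, everything else is routine bookkeeping with covering spaces and with the chain rule used implicitly through $\hat{\pi}$.
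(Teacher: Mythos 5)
Your argument is correct and follows essentially the same route as the paper: the dichotomy comes from the preceding corollary (image of $c$ trivial or all of $\mathbb{A}_3$), and the lift $\hat{\f}$ is obtained by descending $\tilde{\f}$ through $\hat{\pi}$ using $\gamma^*\tilde{\f}=c(\gamma)\tilde{\f}=\tilde{\f}$ for $\gamma\in\Gamma=\ker(c)$. In fact you spell out the descent and the verification $\pi\circ\hat{\f}=\hat{f}$ more explicitly than the paper, which leaves these steps implicit.
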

  
  \begin{proof}
  Since the image of $c$ is either only the identity element of $S^1$ or the full group of cubic roots, the kernel of $c$ either is all of $\pi_1 (M)$ or a subgroup $\Gamma$ satisfying $\mathbb{A}_3 \cong \pi_1(M) / \Gamma$.
  
  In the first case  $\hat{M} = M$ and $\hat{\f} $ actually is a global lift of $f$. In the second case, the map $\hat{f} : \hat{M} \rightarrow \C P^2$ has a global lift, namely 
  $\hat{\f} : \hat{M} \rightarrow S^5$.
 \end{proof}
 
 \begin{Corollary}
 Let $M$ be a Riemann surface different from $S^2$ and $f: M \rightarrow \C P^2$ an immersion without complex points.
 Then either $f$ has a global lift $\f : M \rightarrow S^5,$  or there exists a $3$-fold covering 
 $\tau: \hat{M} \rightarrow M$ of $M$ such that the immersion 
 $\hat{f} = f \circ \tau: \hat{M} \rightarrow \C P^2$ has a global lift, while the given 
 $f : M \rightarrow \C P^2$ has not.
 \end{Corollary}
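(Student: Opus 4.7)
The plan is to derive this corollary as a direct consequence of the preceding theorem, which already packages nearly everything we need; the only new content is the last clause, that in the three-fold case the given $f$ truly does \emph{not} admit a global lift.

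First I would apply the preceding theorem to the given immersion $f: M \rightarrow \C P^2$ to obtain the universal lift $\tilde{\f}:\D \rightarrow S^5$ (normalized so that $\det \mathcal{F}(\tilde{\f}) = 1$), together with the homomorphism $c: \pi_1(M) \rightarrow \mathbb{A}_3$ and the intermediate Riemann surface $\hat{M} = \ker(c) \backslash \D$. By the Corollary preceding the theorem, the image of $c$ is either trivial or all of $\mathbb{A}_3$, so the covering $\tau : \hat{M} \rightarrow M$ has order $1$ or $3$. If $\tau$ has order $1$, then $\hat{M} = M$ and $\hat{\f}$ from the theorem is already a global lift of $f$, settling the first alternative. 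If $\tau$ has order $3$, the theorem produces the global lift $\hat{\f}: \hat{M} \rightarrow S^5$ of $\hat{f} = f \circ \tau$.

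The step that still needs to be addressed is why, in the order-$3$ case, $f$ itself cannot possess a global lift. The argument I would give is by contradiction: suppose $\f: M \rightarrow S^5$ were a global lift of $f$. Then $\tilde{\f}' = \f \circ \tilde{\pi}$ is a lift of $\tilde{f}$ on $\D$, and after multiplying by a suitable element of $S^1$ we may assume $\det \mathcal{F}(\tilde{\f}') = 1$. By part (b) of the first theorem in the appendix, $\tilde{\f}'$ and $\tilde{\f}$ differ by a (constant) cubic root of unity, so the cocycle $c': \pi_1(M) \rightarrow \mathbb{A}_3$ attached to $\tilde{\f}'$ equals $c$. On the other hand, $\tilde{\f}'$ was built by pullback from a globally defined lift on $M$, hence is invariant under the deck action, so $c' \equiv 1$. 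This contradicts the assumption that $c$ has image $\mathbb{A}_3$.

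The main obstacle, and really the only non-mechanical point, is the last paragraph above: one must observe carefully that normalization of the determinant determines $\tilde{\f}$ uniquely up to a \emph{global} cubic root of unity (via part (b) of the first appendix theorem), so that the cocycle $c$ is an invariant of $f$ itself and not just of the particular normalized lift chosen. Once that rigidity is in hand, the dichotomy between the two alternatives is forced, and the corollary follows immediately.
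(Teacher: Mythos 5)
Your reduction to the preceding theorem is fine as far as the dichotomy goes, but observe that the corollary already follows from that theorem by a trivial case split, which is all the paper (implicitly) uses: if $f$ has a global lift we are in the first branch; if it has none, the theorem's first alternative cannot occur, so $\tau$ has order three and $\hat f=f\circ\tau$ has the global lift $\hat\f$, while ``$f$ has not'' holds by hypothesis. No converse implication is required.

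The extra statement you set out to prove --- that in the order-three case (nontrivial $c$) the map $f$ cannot possess \emph{any} global lift --- is exactly where your argument has a genuine gap. The homomorphism $c$ is attached to a lift normalized by $\det\mathcal{F}=1$. Starting from a hypothetical global lift $\f:M\to S^5$, the pullback $\tilde\f'=\f\circ\tilde\pi$ is indeed invariant under the deck action, but the normalization multiplies it by a cube root $\delta$ of $\det\mathcal{F}(\tilde\f')^{-1}$; the determinant $\det\mathcal{F}(\tilde\f')$ is deck-invariant (it descends to a map $M\to S^1$, since the coordinate factor $k$ has determinant $1$), yet its cube root on $\D$ need not be. One gets $c(\gamma)=\delta(\gamma z)/\delta(z)=e^{-2\pi i\, n(\gamma)/3}$, where $n(\gamma)$ is the winding number of $\det\mathcal{F}(\f):M\to S^1$ along $\gamma$, and nothing forces $n(\gamma)\equiv 0 \pmod 3$. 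So the step ``$\tilde\f'$ is deck-invariant, hence $c'\equiv 1$'' conflates the (trivial) cocycle of the unnormalized lift with the cocycle of the normalized lift, which is the one defining $c$; the desired contradiction does not follow, and the argument cannot be repaired, because choosing $\delta$ deck-invariantly is possible precisely when $c\equiv 1$, which is what you are trying to prove. Indeed, a nontrivial $c$ only obstructs a global \emph{special} lift (one with $\det\mathcal{F}=1$), not a global lift as such: for instance, for non-compact $M$ the earlier appendix theorem produces a global lift of every such $f$ regardless of $c$. So the stronger implication you aim at is both unnecessary for the corollary and not established by your argument.
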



\bibliographystyle{plain}
\def\cprime{$'$}

\end{document}